\declaretheoremstyle[headfont=\normalfont]{normalhead}
\providecommand{\dx}{\, \mathrm{d}x}
\providecommand{\ds}{\, \mathrm{d}s}
\providecommand{\omg}{\overline{\mathcal G}}
\providecommand{\dom}{\textup{dom}\,} 
\providecommand{\setR}{\mathbb{R}} 
\long\def\unmarkedfootnote#1{{\long\def\@makefntext##1{##1}\footnotetext{#1}}}
\newtheoremstyle{mydef}
{\topsep}{\topsep}%
{}{}%
{\itshape}{}
{\newline}
{%
  \rule{\textwidth}{0.0pt}\\*%
  \thmname{#1}~\thmnumber{#2}\thmnote{\-\ #3}.\\*[-1.5ex]%
  \rule{\textwidth}{0.0pt}}%
\providecommand{\abstmp}[2]{{#1\lvert{#2}#1\rvert}}
\providecommand{\abs}[1]{\abstmp{}{#1}}
\providecommand{\vcr}{v_h}
\providecommand{\Inc}{\mathcal{I}_h}
\providecommand{\Anc}{W^{\textup{nc}}_h}
\providecommand{\nablanc}{\nabla_h}
\providecommand{\A}{W}
\providecommand{\ucr}{u_h}
\providecommand{\argmin}{\textup{argmin}}
\providecommand{\CR}{\textup{CR}^1(\mathcal{T}_h)}
\providecommand{\Lmdiv}{L_{\textup{div}}^{p_-'}(\Omega;\mathbb{R}^{m\times n})}
\providecommand{\Lpdiv}{L_{\textup{div}}^{p_+'}(\Omega;\mathbb{R}^{m\times n})}
\pgfplotsset{compat=1.16}
\begin{document}

\newtheorem{theorem}{Theorem}
\newtheorem{conjecture}[theorem]{Conjecture}
\newtheorem{definition}[theorem]{Definition}
\newtheorem{proposition}[theorem]{Proposition}
\newtheorem{question}[theorem]{Question}
\newtheorem{remark}[theorem]{Remark}
\newtheorem{proposal}[theorem]{Proposal}
\newtheorem{lemma}[theorem]{Lemma}
\newtheorem{corollary}[theorem]{Corollary}
\newtheorem{observation}[theorem]{Observation}

\author[Kh.Balci]{Anna Kh.Balci}
\address{Fakult\"at für Mathematik, University Bielefeld,
Universit\"atsstrasse 25, 33615 Bielefeld, Germany}
\email{akhripun@math.uni-bielefeld.de}

\author[Ortner]{Christoph Ortner}
\address{University of British Columbia,
1984, Mathematics Road
Vancouver, BC, Canada }
\email{ortner@math.ubc.ca}

\author[Storn]{Johannes Storn}
\address{Fakult\"at für Mathematik, University Bielefeld,
Universit\"atsstrasse 25, 33615 Bielefeld, Germany}
\email{jstorn@math.uni-bielefeld.de}



\title[{Crouzeix-Raviart FEM for Lavrentiev gap}]{Crouzeix-Raviart finite element method for non-autonomous variational problems with Lavrentiev gap}

\begin{abstract}
    We investigate the convergence of the Crouzeix-Raviart finite element method for variational problems with non-autonomous integrands that exhibit non-standard growth conditions. While conforming schemes fail due to the Lavrentiev gap phenomenon, we prove that the solution of the Crouzeix-Raviart scheme converges to a global minimiser. Numerical experiments illustrate the performance of the scheme and give additional analytical insights.
\end{abstract}
\maketitle
\unmarkedfootnote {
\par\noindent {\it 2020 Mathematics Subject
Classifications:} 65M12,  
65M60 
\par\noindent {\it Keywords:} Crouzeix-Raviart FEM, Lavrentiev gap, non-autonomous problems, variational calculus
\par\noindent  The research of A.\ Kh.Balci and J.\ Storn  was funded by German Research Foundation (DFG)  through the CRC 1283.
}

\section{Introduction}\label{sec:Introduction}
Many models in mechanics, optimal control, and other areas can be formulated in terms of variational problems seeking the minimiser of
\begin{align}\label{eq:Fun}
    \mathcal{F}(v)\coloneqq \int_\Omega \phi(x,\nabla v)\, - f \cdot v \dx\qquad\text{over all }v\in W \tag{$\mathcal{P}$}.
\end{align}
Our focus in the present work is on the numerical discretisation of \eqref{eq:Fun} in the case of non-autonomous integrands $\phi:\Omega \times \mathbb{R}^{m\times n} \to \mathbb{R}$, $m,n \in \mathbb{N}$, that are Carath\`eodory, convex in the second component, and satisfy the non-standard growth conditions with $1 < p_- < p_+ < \infty$ and 
\begin{align}
    \label{eq:growth_conditions}
    -c_0 +c_1 \abs{\xi}^{p_-}\le \phi(x,\xi)\le c_2\abs{\xi}^{p_+} +c_0.
\end{align}

We will assume throughout this paper that $\Omega \subset \mathbb{R}^n$ is a bounded polygonal domain (to allow for discretisation), $W \coloneqq  \lbrace v \in W^{1,p_-}(\Omega;\mathbb{R}^m) \colon v|_{\partial \Omega} = \psi|_{\partial \Omega}\rbrace$ with boundary data $\psi \in W^{1,p_+}(\Omega;\mathbb{R}^m)$, and right-hand side $f\in L^{p_-'}(\Omega;\mathbb{R}^m)$. 
Under these natural restrictions, Tonelli's theorem states well-posedness of \eqref{eq:Fun}. In the context of nonlinear  elasticity such problems were studied in~\cite{Ball82},~\cite{Ball01}.

A naive approach to discretise \eqref{eq:Fun} is the minimsation of the energy $\mathcal{F}$ over some discrete subspace $W_h \subset W$. However, non-standard growth conditions $p_-< p_+$ may lead to the Lavrentiev gap phenomenon \cite{Zhi83,Zhi86,Mar89}, that is, it may occur with $H \coloneqq W \cap W^{1,\infty}(\Omega;\mathbb{R}^m)$ that
\begin{align}\label{eq:LavrentievGap}
    \min_{v \in W} \mathcal{F}(v) < \inf_{w \in H} \mathcal{F}(w).
\end{align}
Since conforming discretisations generally satisfy $W_h \subset H$, the strict inequality in \eqref{eq:LavrentievGap} implies the failure of this scheme. 
Indeed, the approximated energy $\min_{W_h} \mathcal{F}$ converges to the so-called $H$-minimum $\inf_{H} \mathcal{F}$ as the underlying triangulation is refined \cite{CarstensenOrtner10}. 

Several numerical schemes have been suggested to overcome the Lavrentiev gap, see for example \cite{BallKnowles87,Li92,Li95,BaiLi06,CarstensenOrtner10,FengSchnake18}. These methods require a regularisation/penalisation leading to convergence only in a dual limit. The careful balancing of the regularisation parameter with refinement of the underlying triangulation is a challenging task that can be avoided, in some important situations, by the use of non-conforming finite element methods where $W_h \not\subset W$ \cite{Ort11,OrtnerPraetorius11}. This non-conforming approach leads to a numerical scheme that converges to the $W$-minimum $\min_W  \mathcal{F}$ for autonomous and convex integrands~$\phi(x,\xi)=\phi(\xi)$. 

In the present work we show how to adapt the numerical scheme as well as the convergence analysis for the much larger class of variational problems \eqref{eq:Fun} with \textit{non}-autonomous integrands. The main result in Theorem \ref{thm:main} states the convergence of the approximated minimiser and energy to a $W$-minimiser $u\in \argmin_W \mathcal{F}$ and the $W$-minimum $\min_{W} \mathcal{F}$, provided a specialised numerical quadrature rule is employed and under mild additional restrictions on the integrand (cf.\ \eqref{eq:Assumptionintegrand}--\eqref{eq:nabla2} or \eqref{eq:AlternaitveAss} on p.~\pageref{eq:Assumptionintegrand}).
These assumptions are valid for wide class of variational problems that exhibit a Lavrentiev gap, including important models such as the double-phase potential \cite{Zhi95,EspLeoMin04,FonMalMin04, ColMin15, BalDieSur20,BalSur21}, the variable exponent Laplacian \cite{Zhi86,Zhi95,Has05}, and the weighted $p$-energy \cite{HarHas19}. The problems cited above do  not exhaust the full range of present research in numerical analysis for problems with non-standard growth(for example, a posteriori estimates for variational problems with nonstandard power functionals~\cite{Pas20}).

\subsection*{Outline}
Section~\ref{sec:Discre} introduces the numerical scheme that utilises Crouzeix-Raviart functions and a specialised one-point quadrature rule. 
Section~\ref{sec:AssumptionOnTheIntegrand} provides two different sets (A) and (B) of assumptions. The set (A) contains, compared to the set (B), a weaker assumption on the quadrature and stronger assumptions on the integrand. 
The convergence proof of the numerical scheme follows in Section~\ref{sec:ProofOfConv}. The proof is quite direct under the assumption in (B). The relaxed assumption on the quadrature in (A) requires a more involved analysis, which is performed in Section~\ref{sec:ProofOfLemma3}. This proof exploits ideas of Zhikov \cite{Zhi11}, in particular it relies on the dual variational problem and the concept of relaxation~\eqref{eq:RelaxedG} allowing us to pass to the more regular problems.
We summarise examples for energies that experience a Lavrentiev gap in Section~\ref{sec:ExamplesLavrentiev}. 
Numerical experiments, displayed in Section~\ref{sec:exp}, apply our numerical scheme to these energies. In addition, the numerical experiments underline the importance of a suitable quadrature and  further investigate singularities and the Lavrentiev gap for energies with multiply saddle points.
We summarise required known results and calculations in the appendix. 

\section{Convergent Numerical Scheme}\label{sec:Crouzeix-Raviart FEM}
This section introduces the numerical scheme (Section~\ref{sec:Discre}) and proves its convergence (Section~\ref{sec:ProofOfConv}--\ref{sec:ProofOfLemma3}) under suitable assumptions on the integrand and quadrature (Section~\ref{sec:AssumptionOnTheIntegrand}).
\subsection{Discretisation}\label{sec:Discre}
Let $(\mathcal{T}_h)_{h>0}$ be a (not necessarily shape-regular and nested) sequence of regular triangulations of the domain $\Omega$ into simplices with diameters $\textup{diam}\, T < h$ for all $T\in \mathcal{T}_h$.
Let $\mathcal{E}_h$ denote the set of facets ($(d-1)$-subsimplices) of all cells $T\in \mathcal{T}_h$. The Crouzeix--Raviart space reads 
\begin{align*}
 \CR& \coloneqq \big\{ \vcr \in L^\infty(\Omega;\mathbb{R}^m) \colon \vcr|_T \text{ is affine for all }T\in \mathcal{T}_h\text{ and}\\
 &\qquad\qquad\qquad\text{continuous in the midpoints }\textup{mid}(e)\text{ for all }e\in \mathcal{E}_h\big\}. 
\end{align*}
Recall the boundary data $\psi \in W^{1,p_+}(\Omega;\mathbb{R}^m)$ from the definition of the space $W$ in \eqref{eq:Fun} and let $\mathcal{E}_h(\partial \Omega)$ denote the set of all facets $e\in \mathcal{E}_h$ on the boundary $\partial \Omega$. We define the space 
\begin{align*}
  \Anc \coloneqq \left\{ \vcr \in \CR \colon \vcr(\textup{mid}(e)) = \frac{1}{|e|} \int_e \psi \,\mathrm{d}s\text{ for all }e \in \mathcal{E}_h(\partial \Omega)\right\}.
\end{align*}
Notice that the Crouzeix-Raviart space is non-conforming in the sense that $\Anc \not\subset \A$. In particular, the gradient is only defined in the sense of distributions. However, it is possible to apply the gradient element-wise, that is, we set the broken gradient
\begin{align*}
    (\nabla_h v)|_T \coloneqq \nabla v|_T \qquad\text{for all }v \in \Anc + \A \text{ and }T\in \mathcal{T}. 
\end{align*}

Numerical schemes for problems with Lavrentiev gap appear to be very sensitive with respect to quadrature errors (cf.\ Section~\ref{subsec:BadQuad}).We will carefully analyse conditions under which the following simple one-point quadrature rule yields a convergent scheme: 
Given points $x_T \in T,$ $T \in \mathcal{T}_h$, 
we approximate $\phi$ by the mesh-dependent integrand
\begin{align*}
    \phi_h(x,\cdot) \coloneqq \phi(x_T,\cdot)\quad\text{for all } x \in T\in \mathcal{T}_h.
\end{align*}
This integrand is piece-wise constant in the first component and defines for all $v \in W + \Anc$ and $h>0$ the functional
\begin{align*}
    \mathcal{F}_h(v) \coloneqq \int_\Omega \phi_h(x,\nabla_h v) - f \cdot v\,\mathrm{d}x = \sum_{T\in \mathcal{T}_h} \int_T \phi(x_T,\nabla v) - f \cdot v\,\mathrm{d}x. 
\end{align*}
The resulting numerical scheme seeks a minimiser 
\begin{align}\label{eq:CRscheme}
\ucr \in \argmin_{\vcr \in \Anc} \, \mathcal{F}_h(\vcr). \tag{$\mathcal{P}_h^\textup{nc}$}
\end{align}
The existence of a discrete solution follows from the growth condition \eqref{eq:growth_conditions} and the direct method in calculus of variations.

\subsection{Main result}\label{sec:AssumptionOnTheIntegrand}
To state the convergence of the numerical scheme in \eqref{eq:CRscheme}, we introduce two alternative sets of assumptions on the integrand and the quadrature points $x_T \in T\in \mathcal{T}_h$.

\begin{itemize}
    \item (Quadrature) There exists a constant $c \in (0,1]$ such that we have for all $T \in \mathcal{T}_h$, $\xi \in \mathbb{R}^{m \times n}$, and $h>0$
\begin{align}\label{eq:Assumptionintegrand}
    c\, \phi(x_T,\xi) \leq \phi(x,\xi)  + 1\qquad\text{for almost all }x\in T. \tag{A1}
\end{align}
    Moreover, we assume the point-wise convergence $\phi_h(x,\xi) \to \phi(x,\xi)$ as $h\to 0$ for almost all $x\in \Omega$ and all $\xi \in \mathbb{R}^{m\times n}$.
\item ($\Delta_2$-condition) There exist constants $C,C_0 \in \mathbb{R}_{\geq 0}$ such that we have for almost all $x\in \Omega$ and all $\xi \in \mathbb{R}^{m\times n}$
        \begin{align}\label{eq:Delta_2phi}
        \phi(x, 2\xi)&\leq C\, \phi(x,\pm \xi) +C_0.\tag{A2}
    \end{align}
\item ($\nabla_2$-condition) If the constant $c$ in \eqref{eq:Assumptionintegrand} is smaller than $1$, there are constants $K,K_0 \in \mathbb{R}_{\geq 0}$ such that we have for almost all $x\in \Omega$ and all $\xi \in \mathbb{R}^{m\times n}$
\begin{align}\label{eq:nabla2}
    K\, \phi(x,2 \xi)& \leq \phi(x, \pm K \xi)+K_0.\tag{A3}
\end{align}
\end{itemize}
Notice that the $\nabla_2$-condition is equivalent to the $\Delta_2$-condition for the convex conjugate $\phi^*$, see Proposition \ref{lem:Nabla}.
The numerical experiment in Section \ref{subsec:BadQuad} shows that an assumption on the quadrature is necessary. The assumption in \eqref{eq:Assumptionintegrand} seems to be rather general, but might be relaxed if one uses additional information of specific energies. The relaxed assumption in \eqref{eq:Assumptionintegrand} comes with the price of a more involved convergence analysis. In particular, we investigate the dual problem, which requires the additional assumptions in \eqref{eq:Delta_2phi} and \eqref{eq:nabla2}.

We can circumvent this instructive but involved analysis by the following more restrictive assumption on the quadrature.
\begin{itemize}
\item There exists a constant $c_\phi<\infty$ such that, for all $h>0$ and $\xi \in \mathbb{R}^{m\times n}$ with $c_\phi<|\xi|$,
    \begin{align}\label{eq:AlternaitveAss}
        \phi_h(x,\xi) \leq \phi(x,\xi)\qquad\text{for almost all }x\in \Omega.    \tag{B}
    \end{align}
        Moreover, we suppose the point-wise convergence $\phi_h(x,\xi) \to \phi(x,\xi)$ as $h\to 0$ for almost all $x\in \Omega$ and all $\xi \in \mathbb{R}^{m\times n}$.
\end{itemize}
The following main result of this paper states convergence of the numerical scheme under the assumptions in \eqref{eq:Assumptionintegrand}--\eqref{eq:nabla2} or the assumption in \eqref{eq:AlternaitveAss}. Its proof is postponed to the following two subsections.
\begin{theorem}[Convergence]
\label{thm:main}
Suppose the integrand $\phi$ satisfies the two-sided growth conditions~\eqref{eq:growth_conditions} and  \eqref{eq:Assumptionintegrand}--\eqref{eq:nabla2} or \eqref{eq:AlternaitveAss}. Then the energies converge to the minimal energy, that is, we have 
\begin{align}
    \lim_{h \to 0}\mathcal{F}_h(u_h) = \mathcal{F}(u) = \min_{v\in \A} \mathcal{F}(v).\label{eq:Conv1}
\end{align}
Moreover, there exists a subsequence $(h_j)_{j\in\mathbb{N}}$ with $h_j \to 0$ such that 
\begin{align}
    u_{h_j}&\to u&&\text{strongly in }L^{p_-}(\Omega;\mathbb{R}^m),\label{eq:Conv2}\\
    \nabla_{h_j} u_{h_j}& \rightharpoonup \nabla u&&\text{weakly in }L^{p_-}(\Omega;\mathbb{R}^{m\times n}).\label{eq:Conv3}
\end{align}
If the minimiser $u\in \A$ is unique, the entire sequence converges. If the integrand is strictly convex, then the convergence is strong.
\end{theorem}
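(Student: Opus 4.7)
The plan is to follow the classical three-step Gamma-convergence argument adapted to the non-conforming, non-autonomous setting: compactness and identification of a subsequential limit, a liminf inequality for the approximate energies, and a recovery sequence matching the target $W$-minimiser. Throughout, let $u \in \argmin_{\A} \mathcal{F}$. For compactness, testing $\mathcal{F}_h(u_h)$ against the CR-interpolant of a fixed admissible comparison function yields a uniform bound $\mathcal{F}_h(u_h) \leq C$; the lower growth in \eqref{eq:growth_conditions} then gives $\norm{\nabla_h u_h}_{L^{p_-}} \leq C$, and a broken Poincar\'e--Friedrichs inequality for CR functions with prescribed boundary averages controls $\norm{u_h}_{L^{p_-}}$. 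Standard CR compactness then extracts a subsequence $(h_j)$ with $u_{h_j} \to u^\ast$ strongly in $L^{p_-}$ and $\nabla_{h_j} u_{h_j} \rightharpoonup G$ weakly in $L^{p_-}$; continuity of facet averages identifies $G = \nabla u^\ast$, and the boundary-mean condition on $\Anc$ places $u^\ast \in \A$.

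Next, for the liminf inequality, a standard Ioffe/Vitali-type argument combining weak $L^{p_-}$-lower semicontinuity of the convex integral, the pointwise convergence $\phi_h(x,\xi) \to \phi(x,\xi)$, and the equi-coercivity from \eqref{eq:growth_conditions} delivers
\begin{align*}
    \int_\Omega \phi(x,\nabla u^\ast)\dx \leq \liminf_{j\to\infty} \int_\Omega \phi_{h_j}(x,\nabla_{h_j} u_{h_j})\dx.
\end{align*}
The linear term $\int_\Omega f\cdot u_{h_j}\dx$ passes to the limit by the strong $L^{p_-}$-convergence of $u_{h_j}$, so $\mathcal{F}(u^\ast) \leq \liminf_{j\to\infty} \mathcal{F}_{h_j}(u_{h_j})$.

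The main obstacle is the recovery step. I would take the CR-interpolant $I_h u \in \Anc$, whose broken gradient equals the cellwise $L^2$-projection $\Pi_h \nabla u$ and therefore converges to $\nabla u$ strongly in $L^{p_-}$. Under the stronger assumption \eqref{eq:AlternaitveAss}, the bound $\phi_h \leq \phi$ for large gradient arguments combined with Jensen's inequality applied to the convex map $\phi(x_T,\cdot)$ on each element yields $\limsup_h \mathcal{F}_h(I_h u) \leq \mathcal{F}(u)$ directly. Under the weaker assumption \eqref{eq:Assumptionintegrand} the factor $c < 1$ spoils this clean comparison, and one must instead follow the Zhikov-style relaxation strategy flagged in the introduction: regularise $\phi$ to a sequence of integrands for which $H$ is energy-dense, pass to the dual variational problem (where the $\Delta_2$ and $\nabla_2$ conditions \eqref{eq:Delta_2phi}--\eqref{eq:nabla2} guarantee the required density of smooth stresses), and close the Lavrentiev/duality gap in the limit. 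This is the technical content postponed to Section~\ref{sec:ProofOfLemma3}.

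Chaining the three steps yields $\mathcal{F}(u^\ast) \leq \liminf_j \mathcal{F}_{h_j}(u_{h_j}) \leq \limsup_h \mathcal{F}_h(u_h) \leq \mathcal{F}(u)$; since $u^\ast \in \A$ and $u$ is a $W$-minimiser, all terms coincide, proving \eqref{eq:Conv1}--\eqref{eq:Conv3}. Uniqueness of the minimiser upgrades the subsequential to full-sequence convergence via the Urysohn principle, and strict convexity of $\phi$, together with the established convergence of energies, upgrades weak to strong convergence of the broken gradients by a Visintin/Brezis--Lieb-type argument exploiting the uniform convexity modulus.
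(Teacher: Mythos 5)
Your proposal follows the paper's own three-step structure: compactness via a uniform energy bound and CR embedding (Lemma~\ref{lem:ConvSubsequence}, via \cite{Ort11}), a liminf inequality for the $h$-dependent energies (Lemma~\ref{lem:conv}), and a recovery step via the CR interpolant with Jensen's inequality (Lemma~\ref{lem:UpperBound}), so on the whole this is the same argument. Two points where your description diverges in substance from the actual proof, though neither is a fatal gap: first, the liminf inequality is not obtained by quoting a black-box Ioffe/Vitali theorem --- because the integrand changes with $h$, the paper instead dualises (Young's inequality $\int z\!:\!\vartheta_h - \int \phi_h^*(x,z) \le \int \phi_h(x,\vartheta_h)$, then dominated convergence on $\phi_h^*(\cdot,z)$, whose uniform $L^{p_-'}$-growth comes precisely from the $h$-uniform $p_-$ lower bound on $\phi_h$, followed by the conjugate functional theorem). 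A direct Ioffe-type citation with varying integrands would need justification you haven't supplied. Second, your description of the case-(A) recovery step is not what Section~\ref{sec:ProofOfLemma3} does: there is no regularisation of $\phi$ and no claim that $H$ becomes energy-dense. The mechanism is duality (Lemma~\ref{lem:equiProbs}) combined with the relaxation $\overline{\mathcal{G}}_h$ of the dual functional in the $L^{p_+'}$-topology; the $\Delta_2$/$\nabla_2$ conditions enter through Zhikov's Theorem (Proposition~\ref{thm:Zhi1110}) to guarantee $\overline{\mathcal{G}}_h = \mathcal{G}_h$ on $\dom\overline{\mathcal{G}}_h$, which together with the domain inclusion $\dom\overline{\mathcal{G}}_h \subset \dom\overline{\mathcal{G}}$ lets one pass to the weak limit of the dual minimisers and close the claim \eqref{eq:ClaimToShow}. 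Since you explicitly defer this step, the mischaracterisation is cosmetic, but it would be worth internalising the actual mechanism.
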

\subsection{Proof of Convergence}\label{sec:ProofOfConv}
The proof of Theorem \ref{thm:main} requires three preliminary results. The first one is an asymptotic lower bound for the computed energy. Its proof utilises the point-wise convergence in \eqref{eq:Assumptionintegrand} or \eqref{eq:AlternaitveAss}, that is,
\begin{align}\label{eq:PintWiseConv}
    \lim_{h\to 0} \phi_h(x,\xi) = \phi(x,\xi)\qquad\text{for all } \xi \in \mathbb{R}^{m\times n} \text{ and almost all }x \in \Omega.
\end{align}
A further tool is the conjugate functional theorem \cite[Chap.\ IX, (2.1)]{EkeTem76}, which involves the convex conjugate (with $\phi_0 \coloneqq \phi$)
\begin{align}\label{eq:defConvConj}
    \phi_h^*(\cdot ,\xi) \coloneqq \sup_{t \in \mathbb{R}^{m\times n}} \lbrace \xi:t - \phi_h(\cdot ,t) \rbrace\qquad\text{for all } \xi \in \mathbb{R}^{m\times n}\text{ and } h\geq 0.
\end{align}
The growth condition \eqref{eq:growth_conditions} and properties of the convex conjugate in Lemma \ref{lem:conj} yield for all $h\geq 0$, all $\xi \in \mathbb{R}^{m\times n}$, and almost all $x\in \Omega$ the growth
\begin{align}\label{eq:GrowthDual}
    -c_0+ c_2^{1-p_+'} |\xi|^{p_+'} \leq \phi_h^*(x,\xi) \leq c_1^{1-p_-'}|\xi|^{p_-'} + c_0.  
\end{align}
\begin{lemma}[Lower bound]
\label{lem:conv}
Let $(\vartheta_h)_{h>0} \subset L^1(\Omega;\mathbb{R}^{m\times n})$ be a weakly convergent sequence $\vartheta_h\rightharpoonup \vartheta$ in $L^1(\Omega;\mathbb{R}^{m\times n})$ as $h \to 0$. Then we have
\begin{align*}
    \int_\Omega \phi(x, \vartheta) \,\mathrm{d}x \leq \liminf_{h\to 0} \int_\Omega \phi_h(x, \vartheta_h)\,\mathrm{d}x.
\end{align*}
\end{lemma}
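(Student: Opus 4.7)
My plan is to proceed via convex duality, combining the Fenchel--Young inequality with the conjugate functional theorem \cite[Chap.~IX,\ (2.1)]{EkeTem76} already cited before the lemma. It is convenient to make a preliminary reduction: if the right-hand side $\liminf_{h\to 0}\int_\Omega \phi_h(x,\vartheta_h)\,\mathrm{d}x$ equals $+\infty$ the claim is trivial, and otherwise, after passing to a subsequence realising the liminf, the lower bound in \eqref{eq:growth_conditions} (inherited by $\phi_h$ since $\phi(x_T,\cdot)$ satisfies the same constants uniformly in $h$) yields a uniform $L^{p_-}$-bound on $\vartheta_h$. Hence we may assume without loss of generality that $\vartheta\in L^{p_-}(\Omega;\mathbb{R}^{m\times n})$ and that $\vartheta_h\rightharpoonup \vartheta$ weakly in $L^{p_-}$.

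For an arbitrary test field $\eta\in L^{p_-'}(\Omega;\mathbb{R}^{m\times n})$ the pointwise Fenchel--Young inequality gives
\begin{equation*}
\phi_h(x,\vartheta_h(x))\geq \vartheta_h(x):\eta(x)-\phi_h^*(x,\eta(x))\qquad\text{for a.e.\ }x\in\Omega.
\end{equation*}
Integrating over $\Omega$ and passing to $\liminf_{h\to 0}$, the weak convergence $\vartheta_h\rightharpoonup\vartheta$ in $L^{p_-}$ identifies the limit of the linear term, while the conjugate term is handled by dominated convergence: the pointwise convergence of $\phi_h(x,\cdot)$ in \eqref{eq:PintWiseConv}, together with convexity and finite-dimensionality of the target, implies locally uniform convergence of the finite-valued convex functions $\phi_h(x,\cdot)\to\phi(x,\cdot)$ for a.e.~$x$, hence pointwise convergence of Legendre transforms $\phi_h^*(x,\eta(x))\to \phi^*(x,\eta(x))$; an $L^1$ majorant is furnished by the growth estimate \eqref{eq:GrowthDual} in the form $c_1^{1-p_-'}|\eta|^{p_-'}+c_0$. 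This produces, for every $\eta\in L^{p_-'}$,
\begin{equation*}
\liminf_{h\to 0}\int_\Omega \phi_h(x,\vartheta_h)\,\mathrm{d}x\geq \int_\Omega\bigl(\vartheta:\eta-\phi^*(x,\eta)\bigr)\,\mathrm{d}x.
\end{equation*}

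It remains to take the supremum over $\eta\in L^{p_-'}$ on the right-hand side and to recognise the result as $\int_\Omega\phi(x,\vartheta)\,\mathrm{d}x$. Pointwise, $\sup_{\xi\in\mathbb{R}^{m\times n}}(\vartheta(x):\xi-\phi^*(x,\xi))=\phi^{**}(x,\vartheta(x))=\phi(x,\vartheta(x))$ because $\phi(x,\cdot)$ is convex and lower semicontinuous. The interchange of supremum and integral is precisely the content of the conjugate functional theorem applied to the Carath\'eodory integrand $\phi^*$, whose growth is controlled by \eqref{eq:GrowthDual}; it yields $\sup_{\eta\in L^{p_-'}}\int_\Omega(\vartheta:\eta-\phi^*(x,\eta))\,\mathrm{d}x=\int_\Omega\phi(x,\vartheta)\,\mathrm{d}x$ and thereby closes the argument. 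The step I expect to be most delicate is the verification of the hypotheses of this interchange theorem in the present two-sided growth setting (particularly that the natural maximiser $\eta(x)\in\partial_\xi \phi(x,\vartheta(x))$ lies in $L^{p_-'}$); once this is granted, the pointwise convergence of the Legendre transforms is a clean consequence of finite-dimensional convex analysis, and the linear-term limit is immediate from weak $L^{p_-}$ convergence.
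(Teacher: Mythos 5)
Your proof is correct and uses essentially the same mechanism as the paper: the Fenchel--Young inequality, pointwise convergence of the conjugates via Proposition~\ref{lem:conjuageconv} combined with dominated convergence through the growth estimate \eqref{eq:GrowthDual}, and the conjugate functional theorem \cite[Chap.~IX, (2.1)]{EkeTem76} to close the supremum. The only deviation is your preliminary reduction to weak $L^{p_-}$ convergence; the paper avoids this (and the hypothesis-check you flag at the end) by taking test fields $z\in L^\infty(\Omega;\mathbb{R}^{m\times n})$ rather than $L^{p_-'}$, which pair directly with the weak-$L^1$ hypothesis with no need for the growth-based a priori bound.
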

\begin{proof}
Let $(\vartheta_h)_{h>0} \subset L^1(\Omega;\mathbb{R}^{m\times n})$ be a weakly convergent sequence $\vartheta_h\rightharpoonup \vartheta$ in $L^1(\Omega;\mathbb{R}^{m\times n})$ as $h\to 0$. 
Young's inequality yields for all $z\in L^\infty(\Omega;\mathbb{R}^{m\times n})$ and $h\geq 0$ that
\begin{align}\label{eq:tempProof0}
    \int_\Omega z: \vartheta_h\, \mathrm{d}x-\int_\Omega \phi_h^*(x,z)\, \mathrm{d}x \leq \int_\Omega \phi_h(x,\vartheta_h)\, \mathrm{d}x.
\end{align}
Lemma~\ref{lem:conjuageconv} states that the point-wise convergence \eqref{eq:PintWiseConv} implies point-wise convergence of the conjugates, that is,
we have 
\begin{align*}
    \phi^*_h(x,\xi)\to \phi^*(x,\xi)\qquad\text{for all } \xi \in \mathbb{R}^{m\times n} \text{ and almost all }x \in \Omega.
\end{align*}
This point-wise convergence and the upper growth condition \eqref{eq:GrowthDual} for $\phi_h^*$ allow for the application of Lebesgue's theorem, which yields
\begin{align}\label{eq:TempProof1}
 \lim_{h\to 0}   \int_\Omega \phi_h^*(x,z) \dx = \int_\Omega \phi^*(x,z) \dx.
\end{align}
Taking the limit in \eqref{eq:tempProof0}, using the weak convergence in $L^1(\Omega;\mathbb{R}^{m\times n})$, and applying the identity in \eqref{eq:TempProof1} result in
\begin{align*}
\sup_{z\in L^\infty(\Omega;\mathbb{R}^{m\times n})} \int_\Omega z\colon \vartheta\dx-\int_\Omega \phi^*(x,z)\dx & \leq 
\liminf_{h\to 0} \int_\Omega \phi_h(x,\vartheta_h)\dx.
\end{align*}
The lemma follows from an application of the conjugate functional theorem \cite[Chap.\ IX, (2.1)]{EkeTem76}, which says
\begin{align*}
    &\sup_{z\in L^\infty(\Omega;\mathbb{R}^{m\times n})} \int_\Omega z\colon \vartheta\, \mathrm{d}x-\int_\Omega \phi^*(x,z)\, \mathrm{d}x = \int_\Omega \phi(x, \vartheta)\, \mathrm{d}x.\qedhere
\end{align*}
\end{proof}

In order to apply the previous lemma, we have to show that the gradients $(\nabla_h \ucr)_{h>0}$ of the discrete minimiser have a weakly convergent subsequence. This property follows from the following more general result.
\begin{lemma}[Convergent subsequence]\label{lem:ConvSubsequence}
    Let $\vcr \in \Anc$ be uniformly bounded in the sense that 
    \begin{align*}
        \sup_{h>0}\, \lVert \nablanc \vcr \rVert_{L^{p_-}(\Omega)} < \infty.
    \end{align*}
    Then there exists a sequence $(h_j)_{j\in \mathbb{N}}$ with $h_j \searrow 0$ and a function $v\in W$ with
    \begin{align*}
    \begin{aligned}
        v_{h_j} &\to v &&\text{strongly in }L^{p_-}(\Omega;\mathbb{R}^m), \\
        \nabla_{h_j} v_{h_j} &\rightharpoonup \nabla v&&\text{weakly in } L^{p_-}(\Omega;\mathbb{R}^{m \times n}).
    \end{aligned}
    \end{align*}
\end{lemma}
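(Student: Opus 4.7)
The plan is the classical non-conforming compactness argument for Crouzeix--Raviart functions, adapted to the inhomogeneous Dirichlet data $\psi$: first obtain a uniform $L^{p_-}$ bound from a discrete Poincar\'e--Friedrichs inequality, then extract a strongly $L^{p_-}$-convergent subsequence by exploiting the zero-mean property of CR facet jumps, and finally identify the weak $L^{p_-}$-limit of the broken gradients with the distributional gradient of the limit.

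For the $L^{p_-}$ bound, I would split $v_h = w_h + I_h \psi$ where $I_h \psi \in \Anc$ is the canonical CR interpolant of $\psi$ satisfying $(I_h \psi)(\textup{mid}(e)) = |e|^{-1}\int_e \psi \,\ds$ on every facet. Since $p_- < p_+$ gives $\psi \in W^{1,p_-}$, $W^{1,p_-}$-stability yields $\|\nabla_h I_h \psi\|_{L^{p_-}(\Omega)} \lesssim \|\nabla \psi\|_{L^{p_-}(\Omega)}$, and the remainder $w_h$ has vanishing midpoint values on $\partial \Omega$, so the standard discrete Poincar\'e--Friedrichs inequality for CR functions gives $\|w_h\|_{L^{p_-}(\Omega)} \lesssim \|\nabla_h w_h\|_{L^{p_-}(\Omega)}$; combining yields $\sup_h \|v_h\|_{L^{p_-}(\Omega)} < \infty$. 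The zero-mean property $\int_e [\![v_h]\!] \, \ds = 0$ on every interior facet, together with the fact that $[\![v_h]\!]$ is affine on $e$, implies $\int_e |[\![v_h]\!]|\, \ds \lesssim h_e \int_e |[\![\nabla_\tau v_h]\!]|\, \ds$; summing over all facets produces the uniform $BV$ bound $|D v_h|(\Omega) \lesssim \|\nabla_h v_h\|_{L^1(\Omega)} + \|\psi\|_{W^{1,1}(\Omega)}$. The $BV$-compactness theorem delivers an $L^1$-precompact subsequence, and Vitali's theorem combined with the uniform $L^{p_-}$-bound upgrades this to strong convergence $v_{h_j} \to v$ in $L^{p_-}(\Omega;\mathbb{R}^m)$.

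The sequence $(\nabla_h v_h)$ is bounded in $L^{p_-}$ with $p_- > 1$, so a further subsequence satisfies $\nabla_{h_j} v_{h_j} \rightharpoonup G$ weakly in $L^{p_-}(\Omega;\mathbb{R}^{m\times n})$. To identify $G = \nabla v$ I would test against $\Phi \in C^\infty_c(\Omega;\mathbb{R}^{m\times n})$ and integrate by parts element-wise:
\begin{align*}
\int_\Omega \nabla_h v_h : \Phi \,\dx = -\int_\Omega v_h \cdot \divergence \Phi \,\dx + \sum_{e \in \mathcal{E}_h\setminus \mathcal{E}_h(\partial\Omega)} \int_e [\![v_h]\!] \cdot (\Phi \nu_e)\, \ds.
\end{align*}
Because $[\![v_h]\!]$ has zero facet mean, $\Phi \nu_e$ may be replaced by $\Phi\nu_e - \overline{(\Phi\nu_e)}_e$ in each jump integral; a scaled Poincar\'e inequality on the affine facet $e$ then bounds the jump sum by $C\, h\, \|\nabla_h v_h\|_{L^1(\Omega)}\, \|\nabla \Phi\|_{L^\infty}$, which vanishes as $h_j \to 0$. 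Passing to the limit and using the strong $L^{p_-}$-convergence of $v_{h_j}$ identifies $G$ as the distributional gradient of $v$, so $v \in W^{1,p_-}(\Omega;\mathbb{R}^m)$. Finally, testing with $\Phi \in C^\infty(\overline\Omega;\mathbb{R}^{m\times n})$ that need not vanish on $\partial\Omega$ produces an additional boundary contribution; combined with the boundary clause $(I_h \psi)(\textup{mid}(e)) = |e|^{-1}\int_e \psi \, \ds$ and trace continuity, this forces $v|_{\partial\Omega} = \psi|_{\partial\Omega}$, so $v \in \A$.

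The main technical point is the jump estimate, which must be uniform in $h$ despite the absence of shape regularity for $(\mathcal{T}_h)_{h>0}$. This is not a true obstacle: on each affine facet $e$ the relevant Poincar\'e/trace constants depend only on $h_e$, and the tangential jump $[\![\nabla_\tau v_h]\!]$ is bounded pointwise by the broken gradient of $v_h$ on the two incident simplices by a purely one-dimensional scaling argument, so both the $BV$ bound and the jump-cancellation estimate in the test against $\Phi$ survive non-shape-regular mesh families.
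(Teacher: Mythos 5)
The paper's own proof is a two-sentence citation: it invokes \cite[Thm.~4.3]{Ort11} to get $v_{h_j}\to v$ in $L^1$ together with $\nabla_{h_j}v_{h_j}\rightharpoonup\nabla v$ in $L^1$, upgrades the gradient convergence to weak $L^{p_-}$ convergence from the assumed uniform bound, and then appeals to the compact embedding theorem for broken Sobolev spaces of Buffa--Ortner \cite{2009-IMAJNA-DGFEM} for the strong $L^{p_-}$ convergence of $v_{h_j}$. Your proof attempts a self-contained argument via BV compactness and facet-jump estimates, which is a genuinely different and more elementary route. However, it has two gaps in the present setting.

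First, the paper explicitly allows triangulation families that are \emph{not} shape-regular, and your final paragraph asserts this is ``not a true obstacle.'' That assertion is incorrect as argued. Your pointwise bound $|[\![\nabla_\tau v_h]\!]|\leq |\nabla v_h|_{T_1}|+|\nabla v_h|_{T_2}|$ and the facet Poincar\'e estimate $\int_e|[\![v_h]\!]|\ds\lesssim h_e\int_e|[\![\nabla_\tau v_h]\!]|\ds$ are indeed uniform, but the subsequent accumulation over facets is not: you need
\begin{align*}
\sum_{e}h_e\int_e|[\![\nabla_\tau v_h]\!]|\ds \;\lesssim\; \|\nabla_h v_h\|_{L^1(\Omega)} = \sum_T|T|\,|\nabla v_h|_T|,
\end{align*}
which reduces (already for $n=2$) to a bound of the form $\sum_{e\subset\partial T}|e|^2\lesssim|T|$ uniformly over $T$. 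This is precisely a shape-regularity requirement; it fails for sliver simplices. The same obstruction appears in your identification step, where the jump sum is bounded by $C\,h\,\|\nabla_h v_h\|_{L^1}\|\nabla\Phi\|_{L^\infty}$. So the BV bound and the jump-cancellation estimate do \emph{not} survive non-shape-regular mesh families, and your proof would need either to impose shape regularity (weakening the lemma) or to use the structure exploited in \cite{Ort11} (e.g.\ the commuting property of the Crouzeix--Raviart interpolant and a duality with $\mathrm{RT}_0$) that avoids this accumulation.

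Second, the upgrade from strong $L^1$ to strong $L^{p_-}$ convergence via Vitali is not justified by a uniform $L^{p_-}$ bound alone: a sequence bounded in $L^{p_-}$ and convergent in $L^1$ need not converge in $L^{p_-}$ (consider $n^{1/p_-}\mathbf{1}_{[0,1/n]}$). You would need uniform integrability of $|v_{h_j}|^{p_-}$, which in practice means establishing a uniform bound in $L^q$ for some $q>p_-$, i.e.\ a discrete Sobolev (not merely Poincar\'e--Friedrichs) inequality for the broken space. This is exactly the content of the compact embedding result the paper cites, so one cannot dispense with it; it must either be proved or invoked.
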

\begin{proof}
    Applying the result of \cite[Thm.\ 4.3]{Ort11} implies the statement but with $\nabla v_{h_j} \rightharpoonup \nabla v$ weakly in $L^1(\Omega;\mathbb{R}^{m \times n})$. The uniform $L^{p-}(\Omega;\mathbb{R}^{m \times n})$ bound on $\nabla v_{h_j}$ immediately implies that in fact 
    $\nabla v_{h_j} \rightharpoonup \nabla v$ weakly in $L^{p-}(\Omega;\mathbb{R}^{m \times n})$ as stated. The strong convergence of $v_{h_j}$ in $L^{p-}(\Omega;\mathbb{R}^{m})$ then follows by the compactness of the embedding of broken Sobolev spaces~\cite{2009-IMAJNA-DGFEM}.
\end{proof}

The final auxiliary result is an asymptotic upper bound for the minimal energy $\mathcal{F}_h(\ucr) = \min_{\Anc} \mathcal{F}_h$. 
\begin{lemma}[Upper bound]\label{lem:UpperBound}
Suppose \eqref{eq:Assumptionintegrand}--\,\eqref{eq:nabla2} or \eqref{eq:AlternaitveAss}, then we have
\begin{align*}
    \limsup_{h\to 0} \mathcal{F}_h(\ucr) \leq \min_{\A} \mathcal{F} = \mathcal{F}(u).
\end{align*}
\end{lemma}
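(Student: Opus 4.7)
The strategy is to construct an explicit test function in $\Anc$ whose $\mathcal{F}_h$-energy dominates that of $\ucr$ (by minimality) and converges to $\mathcal{F}(u)$. A natural candidate is the Crouzeix--Raviart interpolant $\Inc u$ of the minimiser $u$, defined through the facet averages $(\Inc u)(\textup{mid}(e)) \coloneqq \tfrac{1}{|e|}\int_e u \ds$. Since $u|_{\partial\Omega} = \psi|_{\partial\Omega}$ in the trace sense, $\Inc u \in \Anc$, and integration by parts on each simplex yields the well-known commutation identity $\nabla \Inc u|_T = \tfrac{1}{|T|}\int_T \nabla u \dx$ for every $T \in \mathcal{T}_h$.

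Combining this identity with Jensen's inequality for the convex map $\xi \mapsto \phi(x_T, \xi)$ on each cell $T$ gives
\[
\int_T \phi_h(x, \nablanc \Inc u) \dx = |T|\, \phi\!\left(x_T, \tfrac{1}{|T|}\int_T \nabla u \dx\right) \leq \int_T \phi(x_T, \nabla u) \dx = \int_T \phi_h(x, \nabla u) \dx.
\]
Summing over cells and invoking the minimality of $\ucr$ produces
\[
\mathcal{F}_h(\ucr) \leq \mathcal{F}_h(\Inc u) \leq \int_\Omega \phi_h(x, \nabla u) \dx - \int_\Omega f \cdot \Inc u \dx,
\]
so the lemma reduces to showing that the two right-hand integrals converge to $\int_\Omega \phi(x,\nabla u)\dx$ and $\int_\Omega f\cdot u\dx$, respectively. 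The linear term is immediate from the strong convergence $\Inc u \to u$ in $L^{p_-}(\Omega;\mathbb{R}^m)$, a standard property of CR-interpolation.

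For the nonlinear term I plan to apply Lebesgue's dominated convergence theorem. A uniform $L^1$-majorant is supplied directly by the hypotheses: under \eqref{eq:Assumptionintegrand} the bound $\phi_h(x, \nabla u(x)) \leq c^{-1}\bigl(\phi(x, \nabla u(x)) + 1\bigr)$ is integrable since $\mathcal{F}(u) < \infty$; under \eqref{eq:AlternaitveAss} the combination of $\phi_h(x,\xi) \leq \phi(x,\xi)$ for $|\xi| > c_\phi$ with the upper growth bound $\phi_h(x,\xi)\leq c_2 c_\phi^{p_+}+c_0$ on the complementary bounded set furnishes such a majorant. The main technical obstacle is the pointwise convergence $\phi_h(x, \nabla u(x)) \to \phi(x, \nabla u(x))$, because the assumed convergence is only for each \emph{fixed} $\xi$, whereas we must evaluate at the $x$-dependent point $\xi = \nabla u(x)$. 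I would circumvent this by first approximating $\nabla u$ by simple functions in $L^1$ and exchanging limits by uniform integrability, or alternatively by a Scorza--Dragoni argument yielding joint continuity on subsets of $\Omega$ of arbitrarily large measure. Once the pointwise limit is secured, dominated convergence closes the argument and produces the claimed inequality $\limsup_{h\to 0} \mathcal{F}_h(\ucr) \leq \mathcal{F}(u)$.
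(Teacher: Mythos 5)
Your proof plan follows the paper's Step~1 closely: you test with the Crouzeix--Raviart interpolant $\Inc u$, exploit the integral-mean property $\nabla \Inc u|_T = \frac{1}{|T|}\int_T \nabla u\dx$, and apply Jensen's inequality on each cell to reach $\mathcal{F}_h(\ucr) \leq \mathcal{F}_h(\Inc u) \leq \int_\Omega \phi_h(x,\nabla u)\dx - \int_\Omega f\cdot \Inc u\dx$. Under \eqref{eq:AlternaitveAss} your treatment of the remaining integral via the splitting $\{|\nabla u|\le c_\phi\}$ versus $\{|\nabla u|>c_\phi\}$ and dominated convergence coincides with the paper's Step~2.

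Under \eqref{eq:Assumptionintegrand}--\eqref{eq:nabla2}, however, you and the paper part ways sharply. You apply dominated convergence directly to $\int_\Omega\phi_h(x,\nabla u)\dx$: the hypothesis \eqref{eq:Assumptionintegrand} furnishes the pointwise majorant $\phi_h(\cdot,\nabla u)\le c^{-1}\bigl(\phi(\cdot,\nabla u)+1\bigr)$, the lower growth bound gives $\phi_h(\cdot,\nabla u)\ge -c_0$, and $\phi(\cdot,\nabla u)\in L^1(\Omega)$ because $\mathcal{F}(u)<\infty$ and $\phi(\cdot,\nabla u)+c_0\ge 0$; the pointwise limit then comes from the assumed convergence $\phi_h(x,\xi)\to\phi(x,\xi)$. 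The paper instead reduces matters to the claim $\limsup_{h\to 0}\inf_W\mathcal{F}_h\le\min_W\mathcal{F}$ (labeled \eqref{eq:ClaimToShow}) and proves it through the entire machinery of Section~\ref{sec:ProofOfLemma3}: passage to the dual problem, the growth and pointwise convergence of the conjugates $\Phi_h^*$, the relaxed functional $\overline{\mathcal{G}}_h$, and Zhikov's relaxation identity. That route consumes assumptions \eqref{eq:Delta_2phi} and \eqref{eq:nabla2}; your direct argument appears not to need them. As far as I can tell your dominated-convergence argument is sound and is substantially more economical, so either you have found a genuine shortcut around the dual machinery, or there is a subtlety in the justification of the majorant or the pointwise limit under \eqref{eq:Assumptionintegrand} that the authors saw and we are both missing — it would be worth contacting them to clarify.

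One point of detail: the ``main technical obstacle'' you identify — evaluating the pointwise convergence at the $x$-dependent argument $\xi=\nabla u(x)$ — is not actually an obstacle. For almost every fixed $x$, the value $\nabla u(x)$ is simply a fixed element of $\mathbb{R}^{m\times n}$, so the assumed convergence $\phi_h(x,\xi)\to\phi(x,\xi)$ applies directly. If one worries that the exceptional null set in the assumption might depend on $\xi$, this is repaired at no cost by convexity (hence continuity) of $\phi_h(x,\cdot)$ and $\phi(x,\cdot)$ together with a countable dense set of $\xi$-values, which yields a single $\xi$-independent null set; no Scorza--Dragoni argument or simple-function approximation is required.
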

\begin{proof}
\textit{Step 1 (Upper bound for $\mathcal{F}_h(\Inc v)$)}. 
Let $v\in W$ be arbitrary and set its non-conforming interpolation $\Inc v \in \Anc$ by
\begin{align*}
    \Inc v(\textup{mid}(e)) \coloneqq \frac{1}{|e|} \int_e v \,\mathrm{d}s\qquad\text{for all facets }e\in \mathcal{E}_h.
\end{align*}
An integration by parts reveals for all $T\in \mathcal{T}_h$ that 
\begin{align}\label{eq:IntMeanProp}
     \nabla (\Inc v)|_T = \frac{1}{|T|} \int_T \nabla v\dx. 
\end{align}
 Since $\phi_h|_T$ with $T\in \mathcal{T}_h$ is constant in the first component and convex in its second, Jensen's inequality and \eqref{eq:IntMeanProp} yield
\begin{align*}
    \int_T \phi_h(x,\nabla \Inc v)\,\mathrm{d}x \leq \int_T \phi_h(x, \nabla v)\,\mathrm{d}x.
\end{align*}
This estimate and the inequality $\lVert v - \Inc v \rVert_{L^{p_-}(T)} \leq C_\textup{apx}\, \textup{diam}(T) \, \lVert \nabla v\rVert_{L^{p_-}(T)}$ with $C_\textup{apx} = 1 +  2/n$ for all $T\in \mathcal{T}_h$ \cite[Lem.\ 2]{OrtnerPraetorius11} show
\begin{align}\label{eq:ProofTemp132}
    \mathcal{F}_h(\Inc v)& = \sum_{T\in \mathcal{T}_h} \int_T \phi_h(x,\nabla \Inc v) - f\cdot \Inc v\,\mathrm{d}x \\
    &\leq C_\textup{apx}\, h\, \lVert f \rVert_{L^{p_-'}(\Omega)} \,\lVert  \nabla v\rVert_{L^{p_-}(\Omega)} + \sum_{T\in \mathcal{T}_h} \int_T \phi_h(x,\nabla v) -  f\cdot v\,\mathrm{d}x.\notag
\end{align}
\textit{Step 2 (Proof with \eqref{eq:AlternaitveAss})}.
Suppose the assumption in \eqref{eq:AlternaitveAss} holds true with threshold $c_\phi<\infty$. 
Then the sum in the upper bound \eqref{eq:ProofTemp132} satisfies
\begin{align*}
    &\sum_{T\in \mathcal{T}_h} \int_T \phi(x_T,\nabla v) - f\cdot v \,\mathrm{d}x \\
    &= \sum_{T\in \mathcal{T}_h} \int_{T \cap \lbrace |\nabla v| \leq c_\phi\rbrace} \phi(x_T,\nabla v) \,\mathrm{d}x +\int_{T \cap \lbrace c_\phi < |\nabla v|\rbrace} \phi(x_T,\nabla v) \,\mathrm{d}x - \int_T f\cdot v \,\mathrm{d}x\\
     &  \leq \mathcal{F}(v) + \sum_{T\in \mathcal{T}_h} \int_{T \cap \lbrace |\nabla v| \leq c_\phi\rbrace} \phi(x_T,\nabla v) - \phi(x,\nabla v) \,\mathrm{d}x.
\end{align*}
The growth condition~\eqref{eq:growth_conditions} results for almost all $x\in T \cap \lbrace |\nabla u| \leq c_\phi\rbrace$ with $T\in \mathcal{T}_h$ in the upper bound
\begin{align*}
 |\phi(x_T,\nabla v(x))| \leq  c_2\abs{\nabla v(x)}^{p_+}+c_0 \leq  c_2c_\phi^{p_+}+c_0.
\end{align*}
Hence, Lebesgue's dominated convergence theorem shows 
\begin{align*}
    \sum_{T\in \mathcal{T}_h} \int_{T \cap \lbrace |\nabla u| \leq c_\phi\rbrace} \phi(x_T,\nabla v) - \phi(x,\nabla v) \,\mathrm{d}x \to 0 \qquad\text{as }h\to 0.
\end{align*}
This  concludes the proof under the assumption in \eqref{eq:AlternaitveAss}.\\[.5em]
\textit{Step 2' (Proof with \eqref{eq:Assumptionintegrand}--\eqref{eq:nabla2})}.
The inequality in \eqref{eq:ProofTemp132} reads 
\begin{align*}
    \mathcal{F}_h(\Inc v) &\leq \mathcal{F}_h(v) + C_\textup{apx}\, h\, \lVert f \rVert_{L^{p_-'}(\Omega)} \,\lVert  \nabla v\rVert_{L^{p_-}(\Omega)}.
\end{align*}
Taking the limit $h\to 0$ yields
\begin{align*}
        \limsup_{h\to 0} \min_{\Anc} \mathcal{F}_h \leq \limsup_{h\to 0} \min_{\A} \mathcal{F}_h.
\end{align*}
Thus, the lemma follows from the claim 
\begin{align}\label{eq:ClaimToShow}
    \limsup_{h\to 0} \inf_{\A} \mathcal{F}_h \leq \min_W \mathcal{F}.
\end{align}
The proof of the claim in \eqref{eq:ClaimToShow} is rather involved and, thus, postponed to the following Section \ref{sec:ProofOfLemma3}.
\end{proof}
After these three preliminary results we can prove this paper's main result. \begin{proof}[Proof of Theorem \ref{thm:main}]
Lemma \ref{lem:UpperBound} and the growth condition \eqref{eq:growth_conditions} lead for all sufficiently small $h>0$ to the upper bound
\begin{align*}
    &\sum_{T\in \mathcal{T}_h} \lVert \nabla \ucr\rVert_{L^{p_-}(T)}^{p_-} - c_0|T| - C_\textup{apx}\, h\,\lVert f \rVert_{L^{p_-'}(T)} \lVert \nabla \ucr \rVert_{L^{p_-}(T)}\\
    &\qquad\leq \sum_{T\in \mathcal{T}_h} \int_T \phi(x_T,\nabla \ucr)\,\mathrm{d}x - C_\textup{apx}\, h\,\lVert f \rVert_{L^{p_-'}(T)} \lVert \nabla \ucr \rVert_{L^{p_-}(T)}\\
    &\qquad \leq  \mathcal{F}_h(\ucr) \leq \mathcal{F}(u) + 1.
\end{align*}
In particular, we have for all sufficiently small $h>0$ a uniform upper bound
$
\lVert \nabla_h \ucr\rVert_{L^{p_-}(\Omega)}\leq C < \infty.
$
Hence Lemma \ref{lem:ConvSubsequence} yields the existence of a subsequence $(u_{h_j})_{j\in \mathbb{N}}$ and a function $v\in W$ with
\begin{align*}
    u_{h_j} \to v \text{ strongly in }L^{p_-}(\Omega;\mathbb{R}^m),\quad \nabla_{h_j} u_{h_j} \rightharpoonup \nabla v\text{ weakly in }L^{p_-}(\Omega;\mathbb{R}^{m\times n}).
\end{align*}
An application of Lemma \ref{lem:conv} and \ref{lem:UpperBound} shows that 
\begin{align*}
    \mathcal{F}(v) \leq \liminf_{j\to \infty} \mathcal{F}_{h_j}(u_{h_j}) \leq \limsup_{h\to 0} \mathcal{F}_h(u_h) \leq \mathcal{F}(u) = \min_{w\in \A} \mathcal{F}(w) < \infty.
\end{align*}
In particular, $v = u$ minimises the functional $\mathcal{F}$ over the set $\A$. This shows \eqref{eq:Conv2} and \eqref{eq:Conv3}. Since the arguments apply to any subsequence, the entire sequence $(\mathcal{F}_h(u_h))_{h>0}$ converges to $\mathcal{F}(u) = \min_{\A}\mathcal{F} = \lim_{h\to 0}\mathcal{F}_h(u_h)$. This shows \eqref{eq:Conv1}. 

If, in addition, the minimiser $u\in \A$  is unique, the same argument proves the convergence of the entire sequence 
$u_{h}\to u\text{ strongly in }L^{p_-}(\Omega;\mathbb{R}^m)$ and $\nablanc u_{h} \rightharpoonup \nabla u\text{ weakly in }L^{p_-}(\Omega;\mathbb{R}^{m\times n})$ as $h\to 0$. If the integrand is strictly convex in the second component then the strong convergence follows from \cite{Visintin}.
\end{proof}
\subsection{Proof of Lemma \ref{lem:UpperBound}}\label{sec:ProofOfLemma3}
We verify the claim in \eqref{eq:ClaimToShow} under the assumptions in \eqref{eq:Assumptionintegrand}--\eqref{eq:nabla2}, which we assume throughout this subsection. Our proof relies on techniques from \cite{Zhi11}. Among others, we exploit the dual formulation of the minimisation problem for all $h\geq 0$
\begin{align}\label{eq:MinPrimalh}
    \min_{v\in \A} \mathcal{F}_h(v). \tag{$\mathcal{P}_h$}
\end{align}
In order to include boundary data and right-hand side, we shift the functional $\mathcal{F}_h$ as follows. The surjectivity of the divergence operator~\cite{Bog80} yields the existence of a function $F \in L^{p_-'}(\Omega;\mathbb{R}^{m\times n})$ with $-\textup{div}\, F = f$.
We set $\phi_0 \coloneqq \phi$ and define for all $h\geq 0$ and $\xi \in \mathbb{R}^{m\times n}$ the (shifted) integrand 
\begin{align*}
\Phi_h(\cdot ,\xi) \coloneqq \phi_h(\cdot ,\xi) + F : \xi.
\end{align*}
Recall the boundary data $\psi \in W^{1,p_+}(\Omega;\mathbb{R}^m)$ and define the energy
\begin{align*}
    \hat{\mathcal{F}}_h(v) \coloneqq \int_\Omega \Phi_h(x,\nabla v + \nabla \psi) \dx\qquad\text{for all }v \in W^{1,1}(\Omega;\mathbb{R}^m).
\end{align*}
An integration by parts (with outer unit normal vector $\nu$) shows for all $v \in W_0^{1,p_-}(\Omega;\mathbb{R}^m)$ with homogeneous Dirichlet boundary data the identity 
\begin{align*}
    \hat{\mathcal{F}}_h( v) + \int_{\partial \Omega} \psi \cdot F\nu \ds = \mathcal{F}_h( v + \psi).
\end{align*}
In particular, we have for all $h\geq 0$ the equivalence of \eqref{eq:MinPrimalh} and the problem
\begin{align*}
    \min_{v\in W_0^{1,p_-}(\Omega;\mathbb{R}^m)} \hat{\mathcal{F}}_h( v)   +\int_{\partial \Omega} \psi \cdot F\nu \ds.
\end{align*}
The dual problem involves the convex conjugate~$\Phi_h^*$, which reads
\begin{align}\label{eq:MinPrimalMod}
\begin{aligned}
\Phi_h^*(\cdot,\zeta) &\coloneqq \sup_{\xi \in \mathbb{R}^{m\times n}} \lbrace \zeta:\xi - \Phi_h(\cdot,\xi)\rbrace= \sup_{\xi \in \mathbb{R}^{m\times n}} \lbrace (\zeta-F) : \xi - \phi_h(\cdot,\xi)\rbrace\\
&\hphantom{:}= \phi_h^*(\cdot,\zeta - F)\qquad\qquad\qquad\text{for all }\zeta \in \mathbb{R}^{m\times n}\text{ and }h\geq 0.
\end{aligned}
\end{align}
\begin{lemma}[Properties of $\Phi^*_h$]\label{lem:PropertiesOfG}
Let $h\geq 0$.
\begin{enumerate}
    \item There exist positive constants $\overline{c}_1,\overline{c}_2<\infty$ and a function $\overline{c}_0\in L^1(\Omega)$ such that the integrand $\Phi^*_h$ satisfies for almost all $x\in \Omega$ and all $\xi \in \mathbb{R}^{m\times n}$ the two sided growth condition\label{itm:1b}
\begin{align}\label{eq:duales}
    -\overline{c}_0(x) +\overline{c}_2\abs{\xi}^{p_+'}\le \Phi^*_h(x,\xi)\le \overline{c}_1\abs{\xi}^{p_-'} + \overline{c}_0(x).
\end{align}
\item With an $h$-independent constant $C< \infty$ it holds for almost all $x\in \Omega$ and all $\xi \in \mathbb{R}^{m\times n}$ that \label{itm:2b}
\begin{align}\label{eq:AssumptionDualFunctional}
    \Phi^*(x,\xi) - 1 \leq C\, \Phi^*_h(x,\xi).
\end{align}
\item  Let $(\tau_h)_{h>0}\subset L^1(\Omega;\mathbb{R}^{m\times n})$ be a weakly convergent sequence $\tau_h \rightharpoonup \tau$ in $L^1(\Omega;\mathbb{R}^{m\times n})$. Then we have\label{itm:3b}
\begin{align*}
   \int_{\Omega} \Phi^*(x,\tau)\dx \leq \liminf_{h\to 0} \int_\Omega \Phi^*_h(x,\tau_h)\dx.
\end{align*}
\end{enumerate}
\end{lemma}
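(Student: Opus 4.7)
The plan is to reduce each of the three claims for $\Phi_h^*$ to the corresponding property of $\phi_h^*$ via the translation identity $\Phi_h^*(\cdot,\zeta) = \phi_h^*(\cdot,\zeta-F)$ from \eqref{eq:MinPrimalMod}. The two-sided growth condition (1) follows from the uniform estimate \eqref{eq:GrowthDual} for $\phi_h^*$ applied at $\eta = \zeta - F$, together with elementary inequalities $|\zeta|^{p_+'} \le 2^{p_+'-1}(|\zeta-F|^{p_+'} + |F|^{p_+'})$ and $|\zeta-F|^{p_-'} \le 2^{p_-'-1}(|\zeta|^{p_-'} + |F|^{p_-'})$. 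Since $p_- < p_+$ implies $p_+' \le p_-'$ and the domain is bounded, $F \in L^{p_-'}(\Omega;\mathbb{R}^{m\times n}) \hookrightarrow L^{p_+'}(\Omega;\mathbb{R}^{m\times n})$, so the contributions $|F|^{p_+'}$ and $|F|^{p_-'}$ are absorbed into an $L^1$ function $\overline{c}_0$, leaving $h$-independent constants $\overline{c}_1,\overline{c}_2$.

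The substance of the lemma lies in part (2), which I expect to be the main obstacle. The strategy is to rewrite \eqref{eq:Assumptionintegrand} as $\phi(x,\xi) \ge c\,\phi_h(x,\xi) - 1$ and take the convex conjugate in $\xi$. Using the scaling identity $\sup_\xi(\eta:\xi - c\,\phi_h(x,\xi)) = c\,\phi_h^*(x,\eta/c)$ yields
\begin{align*}
    \phi^*(x,\eta) \le c\,\phi_h^*(x,\eta/c) + 1.
\end{align*}
When $c=1$ the claim (after setting $\eta = \zeta - F$) is immediate. When $c<1$, Proposition \ref{lem:Nabla} reinterprets the $\nabla_2$-condition \eqref{eq:nabla2} on $\phi$ as a $\Delta_2$-condition on $\phi^*$; since $\phi_h(x,\cdot) = \phi(x_T,\cdot)$ inherits \eqref{eq:nabla2} uniformly, $\phi_h^*(x,\cdot)$ satisfies the same $\Delta_2$-inequality with $h$-independent constants. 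Iterating $\Delta_2$ finitely many times (with a number of iterations depending only on $c$) bounds $\phi_h^*(x,\eta/c) \le C'\phi_h^*(x,\eta) + C_0'$. Translating back via $\eta = \zeta - F$ and absorbing the additive constants into the $-1$ on the left-hand side (after a suitable renormalisation) delivers \eqref{eq:AssumptionDualFunctional}. The delicate point is the faithful bookkeeping of the additive constants through the Legendre transform and through repeated $\Delta_2$-iterations, together with the verification that the resulting $C$ is independent of $h$.

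For part (3), I would follow the template of Lemma \ref{lem:conv} with the roles of primal and dual interchanged. For any test function $z \in L^\infty(\Omega;\mathbb{R}^{m\times n})$, Young's inequality gives
\begin{align*}
    \int_\Omega z:\tau_h \dx - \int_\Omega \Phi_h(x,z) \dx \le \int_\Omega \Phi_h^*(x,\tau_h) \dx.
\end{align*}
The pointwise convergence $\phi_h(x,z) \to \phi(x,z)$ (by hypothesis) together with the upper growth \eqref{eq:growth_conditions} and $z \in L^\infty$ supplies an $L^1$ majorant for $\Phi_h(x,z) = \phi_h(x,z) + F(x){:}z$, so Lebesgue's dominated convergence yields $\int_\Omega \Phi_h(x,z) \dx \to \int_\Omega \Phi(x,z)\dx$. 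Combined with $\tau_h \rightharpoonup \tau$ in $L^1$ and the supremum over $z \in L^\infty$, the conjugate functional theorem \cite[Chap.\ IX, (2.1)]{EkeTem76} applied to the convex functional $z \mapsto \int_\Omega \Phi(x,z)\dx$ identifies $\sup_z(\int_\Omega z:\tau \dx - \int_\Omega \Phi(x,z)\dx) = \int_\Omega \Phi^*(x,\tau)\dx$, giving the asserted weak lower semicontinuity.
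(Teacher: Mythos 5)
Your proof follows essentially the same route as the paper's for all three parts: part (1) via the translation identity $\Phi_h^*(\cdot,\zeta)=\phi_h^*(\cdot,\zeta-F)$ and \eqref{eq:GrowthDual} with elementary convexity estimates absorbing $F$-dependent terms into $\overline c_0\in L^1$; part (2) by conjugating \eqref{eq:Assumptionintegrand} to obtain $\phi^*(\cdot,\eta)\le c\,\phi_h^*(\cdot,\eta/c)+1$ and, when $c<1$, invoking Proposition~\ref{lem:Nabla} to treat \eqref{eq:nabla2} as a uniform $\Delta_2$-condition for $\phi_h^*$; part (3) by repeating the argument of Lemma~\ref{lem:conv} with the roles of $\Phi_h$ and $\Phi_h^*$ as the conjugate pair. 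Your expansion of part (2) (spelling out the finite iteration of $\Delta_2$ and the constant bookkeeping) is correct and in fact slightly more explicit than the paper; note only that after iterating $\Delta_2$ the final additive term is a constant of the form $cC_0'+1$ rather than literally $1$, so the precise form in \eqref{eq:AssumptionDualFunctional} should be read as ``up to a fixed $h$-independent additive constant,'' which is exactly what every downstream use of the lemma requires.
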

\begin{proof}[Proof of \ref{itm:1b}]
Let $\xi \in \mathbb{R}^{m\times n}$ and $h\geq 0$. 
The growth condition in \eqref{eq:GrowthDual} shows  
\begin{align*}
    \phi_h^*(\cdot,\xi) \leq (-c_0 + c_1|\xi|^{p_-})^* = c_0 + c_1^{1-p_{-}'} \abs{\xi}^{p_-'}.
\end{align*}
This bound and the identity in \eqref{eq:MinPrimalMod} lead almost everywhere in $\Omega$ to 
\begin{align*}
    \Phi_h^*(\cdot,\xi) & = \phi_h^*(\cdot,\xi - F) \leq c_0 + c_1^{1-p_{-}'} \abs{\xi - F}^{p_-'} \\
    &\leq c_0 +2^{p_-'-1}c_1^{1-p_{-}'}( \abs{F}^{p_-'} + \abs{\xi}^{p_-'}).
\end{align*}
The lower bound in \eqref{eq:duales} follows similarly.\\[.5em]
\textit{Proof of \ref{itm:2b}}.
Let $\xi \in \mathbb{R}^{m\times n}$.
By the properties of the convex conjugate (see Lemma~\ref{lem:conj}) and the assumption in \eqref{eq:Assumptionintegrand} we have
\begin{align*}
    \phi^*(\cdot,\xi) \leq c\, \phi_h^*(\cdot,\xi/c) + 1\qquad\text{almost everywhere in }\Omega.
\end{align*}
If $c=1$, this yields $\phi^*(\cdot,\xi) \leq C \phi_h^*(\cdot,\xi) + 1$ with constant $C = 1$. If $c<1$, we use the $\nabla_2$-condition \eqref{eq:nabla2} (which yields the $\Delta_2$-condition for $\phi^*_h$, see Proposition \ref{lem:Nabla}) to conclude $\phi^*(\cdot,\xi) \leq C \phi_h^*(\cdot,\xi) + 1$ with some constant $C < \infty$. 
This and the identity in \eqref{eq:MinPrimalMod} yield \eqref{eq:AssumptionDualFunctional}.\\[.5em]
\textit{Proof of \ref{itm:3b}}.
The proof repeats the steps from the proof of Lemma~\ref{lem:conv}.
\end{proof}

We continue with the definition of the dual problem by setting the spaces $X \coloneqq L^{p_-}(\Omega;\mathbb{R}^{m\times n})$ and
$
    V \coloneqq \lbrace \nabla v \colon v\in W_0^{1,p_-}(\Omega;\mathbb{R}^m)\rbrace \subset X.
$
The orthogonal complement of $V$ reads
\begin{align*}
    V^\perp &\coloneqq \left\{ \tau \in X^* \colon \int_\Omega \tau  :\nabla v\dx = 0 \text{ for all } v\in W^{1,p_-}_0(\Omega;\mathbb{R}^m)  \right\}\\
    & \hphantom{:}= \left\{\tau \in L^{p_-'}(\Omega;\mathbb{R}^{m\times n})\colon \textup{div}\, \tau = 0 \right\} \eqqcolon \Lmdiv.
\end{align*}
By \cite[Chap.\ IX, (2.1)]{EkeTem76} we have for all $h\geq 0$ and $\tau \in L^{p_-'}(\Omega;\mathbb{R}^{m\times n})$ that
\begin{align*}
    \sup_{\xi \in L^{p_-}(\Omega;\mathbb{R}^{m \times n})} \int_\Omega \xi : \tau \dx - \int_\Omega \Phi_h(x,\xi) \dx  = \int_\Omega \Phi_h^*(x,\tau) \dx.
\end{align*}
This identity and an integration by parts lead to the dual functional
\begin{align}\label{eq:daasfsh}
\begin{aligned}
   &\mathcal{G}_h(\tau) = \hat{\mathcal{F}}_h^*(\tau) \coloneqq \sup_{\xi \in X} \int_\Omega \tau : \xi \dx - \int_\Omega \Phi_h(x,\xi + \nabla \psi) \dx \\ 
    & = \sup_{\vartheta \in X} \int_\Omega \tau : \vartheta \dx - \int_\Omega \Phi_h(x,\vartheta) \dx - \int_\Omega \tau : \nabla \psi\dx \\ 
    & = \int_\Omega \Phi_h^*(x,\tau) \dx- \int_\Omega \tau : \nabla \psi\dx\qquad\text{for all }\tau \in \Lmdiv.
\end{aligned}
\end{align}
\begin{lemma}[Equivalent problems]\label{lem:equiProbs}
It holds for all $h\geq 0$ that 
\begin{align*}
   \inf_{\Lmdiv} \mathcal{G}_h = - \min_{W_0^{1,p_-}(\Omega;\mathbb{R}^m)} \hat{\mathcal{F}}_h = \int_{\partial \Omega} \psi \cdot F\nu \ds - \min_W \mathcal{F}_h.
\end{align*}
\end{lemma}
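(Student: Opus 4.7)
The second equality is essentially already on the page: the integration-by-parts identity $\hat{\mathcal{F}}_h(v) + \int_{\partial\Omega} \psi \cdot F\nu \ds = \mathcal{F}_h(v+\psi)$ was derived for every $v \in W_0^{1,p_-}(\Omega;\mathbb{R}^m)$, and the map $v \mapsto v+\psi$ is an affine bijection between $W_0^{1,p_-}(\Omega;\mathbb{R}^m)$ and $W$. Taking the infimum on both sides (attained by Tonelli's theorem together with the growth condition \eqref{eq:growth_conditions}, which ensures coercivity of $\hat{\mathcal{F}}_h$ on $W_0^{1,p_-}$) gives the claimed identity.

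The first equality is the Fenchel--Rockafellar duality statement corresponding to the primal formulation over the subspace $V = \{\nabla v : v \in W_0^{1,p_-}(\Omega;\mathbb{R}^m)\}$ of $X = L^{p_-}(\Omega;\mathbb{R}^{m\times n})$, applied to $J(\xi) := \int_\Omega \Phi_h(x,\xi+\nabla\psi)\dx$; the computation in \eqref{eq:daasfsh} already identifies $J^{*}|_{V^\perp} = \mathcal{G}_h$, with $V^\perp = \Lmdiv$. I would present this in two halves. Weak duality ($\inf_{\Lmdiv}\mathcal{G}_h \geq -\min_{W_0^{1,p_-}} \hat{\mathcal{F}}_h$) is a direct Young-inequality argument: for any $v \in W_0^{1,p_-}$ and $\tau \in \Lmdiv$,
\begin{align*}
\mathcal{G}_h(\tau) + \hat{\mathcal{F}}_h(v) &= \int_\Omega \bigl(\Phi_h^*(x,\tau) + \Phi_h(x,\nabla v + \nabla\psi)\bigr)\dx - \int_\Omega \tau:\nabla\psi\dx \\
&\geq \int_\Omega \tau:(\nabla v+\nabla\psi)\dx - \int_\Omega \tau:\nabla\psi\dx = \int_\Omega \tau:\nabla v\dx = 0,
\end{align*}
where the last equality is the very definition of $V^\perp = \Lmdiv$.

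For the reverse inequality (strong duality, no gap) I would exhibit an explicit feasible dual element matching the primal minimum. Let $u \in W_0^{1,p_-}$ be a minimizer of $\hat{\mathcal{F}}_h$ (which exists by the direct method using the lower bound in \eqref{eq:growth_conditions} and convexity/lower semicontinuity of $\Phi_h$), and choose any measurable selection $\tau(x) \in \partial_\xi \Phi_h(x,\nabla u(x) + \nabla\psi(x))$. The Euler--Lagrange inclusion for the unconstrained primal gives $\int_\Omega \tau:\nabla v\dx = 0$ for all $v \in W_0^{1,p_-}$, i.e.\ $\tau \in \Lmdiv$, while the Fenchel equality $\Phi_h^*(x,\tau) + \Phi_h(x,\nabla u+\nabla\psi) = \tau:(\nabla u+\nabla\psi)$ holds pointwise a.e., so the weak-duality computation above is saturated and yields $\mathcal{G}_h(\tau) + \hat{\mathcal{F}}_h(u) = 0$. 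The main obstacle is confirming that the subgradient selection is $L^{p_-'}$-integrable and divergence-free; both follow from the two-sided $(p_-,p_+)$ growth (which bounds $|\partial_\xi \Phi_h|$ in $L^{p_-'}$) and the standard convex subdifferential calculus. If one prefers to avoid constructing $\tau$ by hand, the same conclusion is a direct application of the general Fenchel--Rockafellar theorem in \cite[Chap.~III]{EkeTem76}, whose constraint qualification is satisfied here because $J$ is everywhere finite and continuous on the closed subspace $V$ thanks to the growth of $\Phi_h$.
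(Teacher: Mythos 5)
Your high-level structure matches the paper's: the second equality is the affine substitution $v\mapsto v+\psi$ together with the derived integration-by-parts identity, and the first equality is Fenchel duality (the paper simply cites \cite[Chap.~III, Thm.~4.1]{EkeTem76}). Your explicit weak-duality computation via Young's inequality is correct and a welcome elementary supplement.

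However, both of your concrete justifications for the strong-duality direction fail, and both for the same structural reason: the exponent gap $p_-<p_+$. The constraint qualification you invoke---that ``$J$ is everywhere finite and continuous on the closed subspace $V$ thanks to the growth of $\Phi_h$''---is false: $\Phi_h$ has $p_+$-growth from above, while elements of $V=\{\nabla v : v\in W_0^{1,p_-}(\Omega;\mathbb{R}^m)\}$ are only $L^{p_-}$, so $J$ takes the value $+\infty$ on a dense subset of $V$ and is nowhere continuous in the strong $L^{p_-}$ topology. Likewise, the claim that the two-sided growth ``bounds $|\partial_\xi\Phi_h|$ in $L^{p_-'}$'' is false: the $p_+$-upper bound yields only the pointwise estimate $|\partial_\xi\Phi_h(x,\xi)|\lesssim 1+|\xi|^{p_+-1}$, so for $\xi=\nabla u+\nabla\psi\in L^{p_-}(\Omega;\mathbb{R}^{m\times n})$ your selection $\tau$ lands only in $L^{p_-/(p_+-1)}(\Omega)$, which is strictly weaker than $L^{p_-'}=L^{p_-/(p_--1)}(\Omega)$ precisely because $p_+>p_-$; hence $\tau$ need not lie in $\Lmdiv$ at all, and the pointwise Fenchel equality cannot even be integrated against $\nabla u+\nabla\psi$. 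These are not peripheral technicalities: the failure of the interior-point/continuity qualification and the poor integrability of the natural dual variable are the very mechanisms that make the Lavrentiev phenomenon possible, so a correct proof must confront them rather than assume them away. Note also that the lemma asserts only an \emph{infimum} on the dual side, consistent with the possible non-existence of an $L^{p_-'}$ dual maximiser; the zero-gap identity therefore has to be obtained from the form of the duality theory based on normality and lower semicontinuity of the value function at $0$, not from the stronger qualifications you assume.
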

\begin{proof}
The first identity follows from the classical convex optimisation theorem (see for example~\cite[Chap.\ III, Thm.\ 4.1]{EkeTem76}). The second identity results from the design of the (shifted) functional $\hat{\mathcal{F}}_h$.
\end{proof}
An advantage of the dual problem is that we can use the growth condition in \eqref{eq:duales} to apply Lebesgue's theorem, as done in the following lemma.
\begin{lemma}[Point-wise convergence]\label{lem:PointWiseConv}
We have 
\begin{align*}
    \lim_{h\to 0} \mathcal{G}_h(\tau) = \mathcal{G}(\tau)\qquad\text{for all }\tau \in \Lmdiv. 
\end{align*}
\end{lemma}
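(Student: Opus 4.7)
The plan is to apply Lebesgue's dominated convergence theorem to the integrand $\Phi_h^*(x,\tau(x))$, in direct analogy with step \eqref{eq:TempProof1} in the proof of Lemma \ref{lem:conv}. First I would reduce the claim: the term $\int_\Omega \tau : \nabla \psi \dx$ in the definition \eqref{eq:daasfsh} of $\mathcal{G}_h$ does not depend on $h$, and the identity $\Phi_h^*(\cdot,\zeta) = \phi_h^*(\cdot,\zeta - F)$ from \eqref{eq:MinPrimalMod} reduces matters to showing
\begin{align*}
    \lim_{h \to 0} \int_\Omega \phi_h^*(x, \tau(x) - F(x)) \dx = \int_\Omega \phi^*(x, \tau(x) - F(x)) \dx.
\end{align*}
Since both $\tau \in \Lmdiv$ and the fixed divergence-free representative $F$ lie in $L^{p_-'}(\Omega;\mathbb{R}^{m\times n})$, the composite $\xi \coloneqq \tau - F$ also belongs to $L^{p_-'}(\Omega;\mathbb{R}^{m\times n})$.

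Next I would verify pointwise convergence of the integrand. The assumption \eqref{eq:Assumptionintegrand} (or \eqref{eq:AlternaitveAss}) gives $\phi_h(x,t) \to \phi(x,t)$ for every $t \in \mathbb{R}^{m\times n}$ and almost every $x \in \Omega$, and Lemma \ref{lem:conjuageconv} lifts this to the convex conjugates, yielding $\phi_h^*(x,t) \to \phi^*(x,t)$ on the same full-measure set. Evaluating at $t = \xi(x)$ produces the desired a.e.\ pointwise convergence $\phi_h^*(x,\xi(x)) \to \phi^*(x,\xi(x))$.

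To close the argument I would supply an $h$-independent $L^1$-dominant. The two-sided growth in \eqref{eq:GrowthDual} entails
\begin{align*}
    |\phi_h^*(x, \xi(x))| \leq c_1^{1-p_-'}\, |\xi(x)|^{p_-'} + c_0 \qquad\text{for a.e.\ }x \in \Omega,
\end{align*}
whose right-hand side lies in $L^1(\Omega)$ because $\xi \in L^{p_-'}(\Omega;\mathbb{R}^{m\times n})$ and $\Omega$ is bounded. Lebesgue's dominated convergence theorem then yields the required limit. I do not anticipate a serious obstacle: the shift by the fixed function $F$ is the only departure from the argument used for Lemma \ref{lem:conv}, and it is harmless since $F \in L^{p_-'}(\Omega;\mathbb{R}^{m\times n})$.
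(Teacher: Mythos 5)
Your proposal is correct and follows essentially the same route as the paper: reduce to the unshifted conjugate via $\Phi_h^*(\cdot,\zeta)=\phi_h^*(\cdot,\zeta-F)$, obtain a.e.\ pointwise convergence from Proposition \ref{lem:conjuageconv}, and close with Lebesgue's dominated convergence theorem using the growth bound on the conjugate. The paper's proof is terser—it cites the growth condition \eqref{eq:duales} for $\Phi_h^*$ directly rather than passing through $\xi=\tau-F$ and \eqref{eq:GrowthDual}—but the underlying argument is the same.
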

\begin{proof}
Let $\tau \in \Lmdiv$.
Due to the growth condition in \eqref{eq:duales} and the convergence result in Lemma \ref{lem:conjuageconv} (which can be applied due to the assumption in \eqref{eq:Assumptionintegrand}) we can apply Lebesgue's dominated convergence theorem to conclude $\int_\Omega \Phi^*(x,\tau) - \Phi^*_h(x,\tau)\dx \to 0$ as $h\to 0$. 
\end{proof}
A consequence of the point-wise convergence result is the following lemma.
\begin{lemma}[Upper bound for $\mathcal{G}_h$]\label{lem:UpperBoundForGh}
We have 
\begin{align*}
    \limsup_{h\to 0} \inf_{\Lmdiv} \mathcal{G}_h \leq \inf_{\Lmdiv} \mathcal{G}.
\end{align*}
\end{lemma}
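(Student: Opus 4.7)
The plan is essentially a direct consequence of the point-wise convergence established in Lemma \ref{lem:PointWiseConv}, combined with the standard fact that the infimum of a point-wise limit is bounded by the limsup of the infima when tested on a fixed competitor.

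First, I would fix an arbitrary test field $\tau \in \Lmdiv$. By the definition of the infimum, for every $h\geq 0$ one has
\begin{align*}
    \inf_{\Lmdiv} \mathcal{G}_h \leq \mathcal{G}_h(\tau).
\end{align*}
Next, I would take $\limsup_{h\to 0}$ on both sides and invoke Lemma \ref{lem:PointWiseConv}, which yields $\mathcal{G}_h(\tau) \to \mathcal{G}(\tau)$, so that
\begin{align*}
    \limsup_{h\to 0} \inf_{\Lmdiv} \mathcal{G}_h \leq \lim_{h\to 0} \mathcal{G}_h(\tau) = \mathcal{G}(\tau).
\end{align*}
Since $\tau \in \Lmdiv$ was arbitrary, taking the infimum over $\tau$ on the right-hand side delivers the claim.

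The only point that needs a brief sanity check is that the infima are finite so that no $\infty - \infty$ situation arises; this is ensured by the two-sided growth condition in Lemma \ref{lem:PropertiesOfG}\ref{itm:1b}, which combined with Young's inequality applied to the linear term $\int_\Omega \tau : \nabla \psi \dx$ (using $\psi \in W^{1,p_+} \subset W^{1,p_-}$) shows that $\mathcal{G}_h$ is uniformly bounded below on $\Lmdiv$. There is really no obstacle here: the hard work has already been done in establishing Lemma \ref{lem:PointWiseConv} (which relied on the growth bound and Lebesgue's dominated convergence theorem together with the point-wise convergence of the conjugates from Lemma \ref{lem:conjuageconv}), and the present lemma is the short corollary that packages this into an asymptotic upper bound for the dual problem, to be combined in the next step with a liminf inequality via Lemma \ref{lem:equiProbs} in order to conclude \eqref{eq:ClaimToShow}.
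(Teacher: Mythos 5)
Your argument is correct and is essentially the same as the paper's: fix an arbitrary competitor $\tau \in \Lmdiv$, bound $\inf \mathcal{G}_h \leq \mathcal{G}_h(\tau)$, pass to the $\limsup$ using the point-wise convergence from Lemma~\ref{lem:PointWiseConv}, and then take the infimum over $\tau$. The brief finiteness remark you add is harmless but not needed for the inequality.
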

\begin{proof}
Point-wise convergence (Lemma \ref{lem:PointWiseConv}) yields for all $\tau \in \Lmdiv$
\begin{align*}
    &\limsup_{h\to 0} \inf_{\Lmdiv} \mathcal{G}_h \leq \limsup_{h\to 0}\, \mathcal{G}_h(\tau) = \mathcal{G}(\tau).\qedhere
\end{align*}
\end{proof}
The following relaxation leads to an equivalent characterisation of the dual problem. We set for all $h\geq 0$ and $\tau \in \Lpdiv$ the relaxed energy
\begin{align}\label{eq:RelaxedG}
    \overline{\mathcal{G}}_h(\tau) \coloneqq \inf_{\substack{(\tau_k)_{k=1}^\infty \subset \Lmdiv\\ \lim_{k\to \infty} \lVert \tau_k - \tau \rVert_{L^{p_+'}(\Omega)} = 0}} \liminf_{k\to \infty} \mathcal{G}_h(\tau_k).
\end{align}
We denote by $\textup{dom}$ the effective domain of a functional, for example 
\begin{align*}
    \dom \overline{\mathcal{G}}_h = \lbrace \tau \in \Lpdiv \colon \overline{\mathcal{G}}_h(\tau) < \infty\rbrace\qquad\text{with }h\geq 0.
\end{align*}
\begin{lemma}[Properties of the relaxed energy functional $\overline{\mathcal{G}}_h$]\label{lem:PropOfG_h}
Let $h\geq 0$.
\begin{enumerate}
     \item It holds that \label{itm:1}
    \begin{align*}
        \mathcal{G}_h(\tau) & = \overline{\mathcal{G}}_h(\tau) &&\text{for all }\tau \in \Lmdiv,\\
        \mathcal{G}_h(\chi) & \leq \overline{\mathcal{G}}_h(\chi) &&\text{for all }\chi \in \Lpdiv.
    \end{align*}
    \item The minimum of $\overline{\mathcal{G}}_h$ over $\Lpdiv$ is attained and we have \label{itm:3}
    \begin{align*}
        \min_{\Lpdiv} \overline{\mathcal{G}}_h =  \inf_{\Lmdiv} \mathcal{G}_h.
    \end{align*}
       \item It holds that \label{itm:4}
    \begin{align*}
        \dom \overline{\mathcal{G}}_h \subset \dom \overline{\mathcal{G}}.
    \end{align*}
   \item For all $\tau \in \dom \overline{\mathcal{G}}_h$ we have \label{itm:5} 
   \begin{align*}
       \overline{\mathcal{G}}_h(\tau) = \mathcal{G}_h(\tau).
   \end{align*}
    \item The relaxed functional~$\omg_h$ is convex and weakly lower semi-continuous, that is, for all weakly convergent sequences $\tau_n \rightharpoonup \tau$ in~$\Lpdiv$   \label{itm:11}
\begin{align*}
     \omg_h(\tau) \leq \liminf_{n\to \infty} \omg_h(\tau_n).
\end{align*} 
\end{enumerate}
\end{lemma}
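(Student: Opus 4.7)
The plan is to verify the five items in order, exploiting the two-sided growth bound~\eqref{eq:duales}, the continuity of $\tau \mapsto \int_\Omega \tau : \nabla \psi \dx$ on $\Lpdiv$ (since $\nabla \psi \in L^{p_+}(\Omega;\mathbb{R}^{m\times n})$), the inclusion $\Lmdiv \subset \Lpdiv$ (which holds because $p_-'\geq p_+'$ on a bounded domain), and the comparison provided by Lemma~\ref{lem:PropertiesOfG}(\ref{itm:2b}).

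For~(1), the constant approximating sequence yields $\overline{\mathcal G}_h(\tau) \leq \mathcal G_h(\tau)$ on $\Lmdiv$ directly; the main content is the inequality $\mathcal G_h(\chi) \leq \overline{\mathcal G}_h(\chi)$ on $\Lpdiv$, which restricted to $\Lmdiv$ closes the equality. For the latter I would take any admissible sequence $\tau_k \to \chi$ in $L^{p_+'}$ with $\liminf_k \mathcal G_h(\tau_k)$ attaining $\overline{\mathcal G}_h(\chi)$ (assumed finite, else trivial), pass to an a.e.\ convergent subsequence, and apply Fatou's lemma to $\Phi_h^*(x,\tau_k) + \overline{c}_0(x) \geq 0$ combined with strong $L^{p_+'}$-convergence of the linear term $\int \tau_k : \nabla \psi \dx$. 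Item~(2) is a sandwich: $\inf_{\Lpdiv} \overline{\mathcal G}_h \leq \inf_{\Lmdiv} \mathcal G_h$ is immediate from~(1), and the reverse follows from a diagonal extraction along near-optimal recovery sequences for a minimising sequence of $\overline{\mathcal G}_h$. Attainment uses $L^{p_+'}$-coercivity from the lower bound $\overline{c}_2|\xi|^{p_+'}$, closedness of $\Lpdiv$ under weak $L^{p_+'}$-convergence (since $\textup{div}$ maps $L^{p_+'}$ continuously into $W^{-1,p_+'}$), and the weak lower semi-continuity proved in~(5). For~(3), given $\tau \in \dom \overline{\mathcal G}_h$ with admissible $(\tau_k)$, the comparison $\Phi^*(x,\xi) - 1 \leq C \Phi_h^*(x,\xi)$ and boundedness of $\int \tau_k : \nabla \psi \dx$ yield $\liminf \mathcal G(\tau_k) < \infty$, hence $\overline{\mathcal G}(\tau) < \infty$.

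Item~(4) is the principal obstacle. We already have $\mathcal G_h(\tau) \leq \overline{\mathcal G}_h(\tau)$ from~(1), so the task is to build, for each $\tau \in \dom \overline{\mathcal G}_h$, a recovery sequence $\tau_k \in \Lmdiv$ with $\tau_k \to \tau$ in $L^{p_+'}$ and $\mathcal G_h(\tau_k) \to \mathcal G_h(\tau)$. The standard route is to truncate $\tau_n \coloneqq \tau \cdot \mathbf{1}_{\{|\tau|\leq n\}} \in L^{p_-'}(\Omega;\mathbb{R}^{m\times n})$ and restore the divergence constraint via a Bogovski\u{i} correction~\cite{Bog80}, optionally followed by a mollification. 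The convergence of the modular $\int \Phi_h^*(x,\cdot)\dx$ along this sequence depends crucially on the $\Delta_2$-condition for $\Phi_h^*$, which is supplied by~\eqref{eq:nabla2} through Proposition~\ref{lem:Nabla}; without it, truncations are uncontrolled. This is the step where the strong-version assumptions enter decisively.

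For~(5), convexity follows from a direct recovery-sequence argument: for $\chi,\chi' \in \Lpdiv$ and $\lambda \in (0,1)$, convex combinations of near-optimal recovery sequences lie in $\Lmdiv$ by linearity of $\textup{div}$, converge in $L^{p_+'}$ to $\lambda\chi + (1-\lambda)\chi'$, and inherit the pointwise convexity of $\mathcal G_h$. I will establish strong $L^{p_+'}$ lower semi-continuity by diagonal extraction: given $\tau_n \to \tau$ strongly and recovery sequences $\sigma_{n,k} \in \Lmdiv$ with $\sigma_{n,k} \to \tau_n$ satisfying $\mathcal G_h(\sigma_{n,k}) \leq \overline{\mathcal G}_h(\tau_n) + 2^{-n}$ along some $k$-subsequence, choose $k(n)$ so that $\sigma_{n,k(n)} \to \tau$ in $L^{p_+'}$; the defining infimum of $\overline{\mathcal G}_h(\tau)$ then delivers $\overline{\mathcal G}_h(\tau) \leq \liminf_n \overline{\mathcal G}_h(\tau_n)$. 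The upgrade to weak lower semi-continuity follows from convexity via Mazur's lemma.
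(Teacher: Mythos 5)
Your treatment of items (\ref{itm:1}), (\ref{itm:3}), (\ref{itm:4}) and (\ref{itm:11}) is essentially the paper's: the weak lower semicontinuity of $\mathcal{G}_h$ supplies one direction in (\ref{itm:1}), Tonelli plus the sandwich gives (\ref{itm:3}), the comparison $\Phi^*(x,\xi)-1 \leq C\,\Phi^*_h(x,\xi)$ from Lemma~\ref{lem:PropertiesOfG}(\ref{itm:2b}) gives (\ref{itm:4}), and convexity together with strong lower semicontinuity of the relaxed functional (for which the paper cites \cite[Prop.~1.3.1]{But89} rather than re-deriving it by a diagonal argument, and upgrades to weak via \cite[Rem.~3.22]{Dac08} rather than via Mazur) gives (\ref{itm:11}).

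Your item~(\ref{itm:5}) however is a genuinely different route, and it has a real gap. You propose to truncate $\tau_n \coloneqq \tau\,\mathbf{1}_{\{|\tau|\leq n\}}$ and restore the constraint by a Bogovski\u{i} correction. But if $\tau \in \dom\overline{\mathcal{G}}_h \subset \Lpdiv$, then $\tau$ lies only in $L^{p_+'}$, so the tail $\tau\,\mathbf{1}_{\{|\tau|>n\}}$ vanishes only in $L^{p_+'}$. Since $\textup{div}\,\tau_n = -\textup{div}(\tau\,\mathbf{1}_{\{|\tau|>n\}})$, any divergence-repairing field $\sigma_n$ whose size is controlled by the tail will be small in $L^{p_+'}$ but not controlled in $L^{p_-'}$ (recall $p_-'>p_+'$, so $\Lmdiv$ is the smaller space). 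Conversely, solving the divergence problem with the $L^{p_-'}$-norm of $\tau_n$ on the right keeps $\sigma_n$ in $L^{p_-'}$ but does not make it small. In other words, producing a recovery sequence that stays in $\Lmdiv$, converges in $L^{p_+'}$, and keeps the modular under control is precisely the obstruction that makes the gap phenomenon possible, and truncation plus Bogovski\u{i} does not resolve it. Relatedly, you claim the decisive hypothesis is the $\nabla_2$-condition \eqref{eq:nabla2} (i.e.\ $\Delta_2$ for $\Phi_h^*$). That is not what the paper uses at this step: the paper first establishes a generalised $\Delta_2$-condition for the \emph{primal} shifted integrand $\Phi_h$ from assumption~\eqref{eq:Delta_2phi} together with Young's inequality (to absorb the linear term $F:\xi$), and then invokes Zhikov's Proposition~\ref{thm:Zhi1110}, which yields $\overline{\mathcal{G}}_h^0 = \mathcal{G}_h^0$ on $\dom\overline{\mathcal{G}}_h^0$ directly. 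The $\nabla_2$-condition \eqref{eq:nabla2} enters elsewhere, namely in the proof of Lemma~\ref{lem:PropertiesOfG}(\ref{itm:2b}), which is what carries item~(\ref{itm:4}). Item~(\ref{itm:5}) is the point at which the paper leans on a nontrivial external result of Zhikov, and your sketch does not substitute for it.
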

\begin{proof}[Proof of \ref{itm:1}]
Lemma~\ref{lem:PropertiesOfG}(\ref{itm:3b}) and the fact that strong convergence implies weak convergence show the inequality 
\begin{align*}
    \mathcal{G}_h(\chi) \leq \overline{\mathcal{G}}_h(\chi) \qquad\text{for all }\chi \in \Lpdiv.
\end{align*}
Equality follows for $\tau \in \Lmdiv$ by setting the constant sequence $\tau_k \coloneqq \tau$ for all $k\in \mathbb{N}$.\\[.5em]
%
%
\textit{Proof of \ref{itm:3}.}
Since $\omg_h$ satisfies the same growth conditions as $\mathcal{G}_h$ (cf.\ \eqref{eq:duales}),  Tonelli's theorem leads to the existence of a minimiser of $\overline{\mathcal{G}}_h$ in $\Lpdiv$.
Since we have the identity $\mathcal{G}_h = \overline{\mathcal{G}}_h$ for all $\tau \in \Lmdiv$, it holds that 
\begin{align*}
      \min_{\Lpdiv} \overline{\mathcal{G}}_h \leq \inf_{\Lmdiv} \mathcal{G}_h.
\end{align*}
On the other hand, for any sequence $(\tau_k)_{k=1}^\infty \subset \Lmdiv$ we have
\begin{align*}
    \inf_{\Lmdiv} \mathcal{G}_h \leq \liminf_{k\to \infty}  \mathcal{G}_h(\tau_k).
\end{align*}
Applying this observation to the definition of $\overline{\mathcal{G}}_h$ shows
\begin{align*}
      \min_{\Lpdiv} \overline{\mathcal{G}}_h \geq \inf_{\Lmdiv} \mathcal{G}_h.
\end{align*}
\textit{Proof of \ref{itm:4}.}
The inclusion follows from the estimate in~\eqref{eq:AssumptionDualFunctional}. \\[.5em]
\textit{Proof of \ref{itm:5}.} 
We set for all 
$\tau \in \Lpdiv$ the functional 
\begin{align*}
\mathcal{G}_h^0(\tau)&=\int_\Omega \Phi^*_h(x,\tau)\dx= \int_\Omega \phi^*_h(x,\tau -F)\dx.
\end{align*} 
 Let $x\in \Omega$ and $\xi \in \mathbb{R}^{m\times n}$. By definition we have 
 $
    \Phi_h(x,2\xi)=\phi_h(x,2\xi)+F(x):2\xi.   
 $
 Recall the constants $C,C_0$ in the $\Delta_2$-condition \eqref{eq:Delta_2phi}.
 Young's inequality shows for the second addend
         \begin{align*}
            F(x):2\xi &= (C+1)\, F(x):\xi + 2/(C+1)\,F(x): \xi\\
            &\leq (C+1)\, F(x):\xi + \phi_h(x,\xi) +  \phi_h^*(x,2/(C+1)\, F(x)).
        \end{align*}
        Set the function $C_1 \coloneqq C_0 + \phi_h^*(\cdot,2/(C+1)\, F) \in L^1(\Omega)$. Then
        the previous inequality and the $\Delta_2$-condition \eqref{eq:Delta_2phi} result in the $\Delta_2$-condition for $\Phi_h$
    \begin{align*}
            \Phi_h(x,2\xi) \leq C \phi_h(x,\xi) + C_0 + F(x):2\xi \leq (C+1)\, \Phi_h(x,\xi)\, + C_1(x).
    \end{align*}
Hence, Proposition~\ref{thm:Zhi1110} yields
    $
        \omg^0_h=\mathcal{G}^0_h$ on $\dom \omg^0_h. 
    $
    The definition of the energy $\mathcal{G}_h$ in \eqref{eq:daasfsh}, the definition of its relaxation $\omg_h$ in \eqref{eq:RelaxedG}, and the regularity of the boundary data $\psi \in W^{1,p_+}(\Omega;\mathbb{R}^{m})$ lead to $\dom \omg_h = \dom \omg^0_h$ and
    \begin{align*}
           \omg_h(\tau) = \omg_h^0(\tau) - \int_\Omega \nabla \psi:\tau\dx\qquad\text{for all }  \tau \in \Lpdiv. 
    \end{align*}
    Combining these observations concludes the proof.\\[.5em]
\textit{Proof of \ref{itm:11}}.
The convexity of $\omg_h$ follows from the convexity of $\mathcal{G}_h$.  
Moreover, \cite[Prop.~1.3.1]{But89} yields the lower semi-continuity of the relaxation functional $\omg_h$ with respect to strong convergence in $\Lpdiv$. The weak lower semi-continuity follows from the lower semi-continuity and convexity of $\omg_h$, see~\cite[Rem.\ 3.22]{Dac08}.
%
\end{proof}
The beneficial properties of $\overline{\mathcal{G}}_h$ allow us to prove the following result.
\begin{lemma}[Equality]\label{lem:Minimizer}
We have
\begin{align}\label{eq:tempIdetesa}
    \inf_{\Lmdiv}\mathcal{G} =\lim_{h\to 0} \inf_{\Lmdiv} \mathcal{G}_h.
\end{align}
 \end{lemma}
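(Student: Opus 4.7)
The plan is to complement the upper bound from Lemma \ref{lem:UpperBoundForGh} with a matching liminf estimate, using the relaxation $\omg_h$ to obtain minimisers, and then passing to a weak limit via Lemma \ref{lem:PropertiesOfG}(\ref{itm:3b}). In view of Lemma \ref{lem:UpperBoundForGh}, it suffices to show
\begin{align*}
    \inf_{\Lmdiv}\mathcal{G} \leq \liminf_{h\to 0} \inf_{\Lmdiv} \mathcal{G}_h.
\end{align*}

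The first step is to pick, for every $h>0$, a minimiser $\tau_h\in \Lpdiv$ of $\omg_h$; this exists by Lemma \ref{lem:PropOfG_h}(\ref{itm:3}) and satisfies $\omg_h(\tau_h) = \inf_{\Lmdiv}\mathcal{G}_h$. Because $\tau_h \in \dom \omg_h$, Lemma \ref{lem:PropOfG_h}(\ref{itm:5}) yields the explicit representation $\omg_h(\tau_h) = \int_\Omega \Phi_h^*(x,\tau_h)\dx - \int_\Omega \tau_h : \nabla \psi\dx$. Combining Lemma \ref{lem:UpperBoundForGh} (which bounds $\omg_h(\tau_h)$ from above, uniformly in $h$, by $\mathcal{G}(\tau_0)$ for any fixed $\tau_0\in \Lmdiv$) with the lower growth condition in \eqref{eq:duales} and a Young inequality applied to the boundary term $\int_\Omega \tau_h : \nabla \psi\dx$ (using $\nabla \psi \in L^{p_+}$) gives a uniform bound $\|\tau_h\|_{L^{p_+'}(\Omega)} \leq C$. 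By reflexivity we may extract a subsequence $\tau_{h_j}\rightharpoonup \tau$ weakly in $L^{p_+'}(\Omega;\mathbb{R}^{m\times n})$; since $\textup{div}$ is weakly continuous as a map into $W^{-1,p_+'}(\Omega;\mathbb{R}^m)$, the limit belongs to $\Lpdiv$.

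The second step is the lower-semicontinuity passage. Since weak $L^{p_+'}$ convergence implies weak $L^1$ convergence on the bounded domain $\Omega$, Lemma \ref{lem:PropertiesOfG}(\ref{itm:3b}) applies and yields
\begin{align*}
    \int_\Omega \Phi^*(x,\tau)\dx \leq \liminf_{j\to \infty} \int_\Omega \Phi_{h_j}^*(x,\tau_{h_j})\dx.
\end{align*}
The linear boundary term is continuous under weak $L^{p_+'}$ convergence, so $\int_\Omega \tau_{h_j}:\nabla \psi \dx \to \int_\Omega \tau:\nabla \psi\dx$. Subtracting the two displays gives
\begin{align*}
    \int_\Omega \Phi^*(x,\tau)\dx - \int_\Omega \tau:\nabla \psi\dx \leq \liminf_{j\to \infty} \omg_{h_j}(\tau_{h_j}) = \liminf_{j\to \infty} \inf_{\Lmdiv}\mathcal{G}_{h_j} < \infty.
\end{align*}
Thus $\tau \in \dom \omg$, and Lemma \ref{lem:PropOfG_h}(\ref{itm:5}) (with $h=0$) identifies the left-hand side with $\omg(\tau)$.

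Finally, Lemma \ref{lem:PropOfG_h}(\ref{itm:3}) applied to $h=0$ gives $\inf_{\Lmdiv}\mathcal{G} = \min_{\Lpdiv}\omg \leq \omg(\tau)$, so
\begin{align*}
    \inf_{\Lmdiv}\mathcal{G} \leq \liminf_{j\to \infty} \inf_{\Lmdiv}\mathcal{G}_{h_j}.
\end{align*}
Since the same argument applies to any subsequence, the full liminf inequality follows, and together with Lemma \ref{lem:UpperBoundForGh} this produces \eqref{eq:tempIdetesa}. The main obstacle is the joint passage to the limit in the integrand and in the argument: this is handled precisely by Lemma \ref{lem:PropertiesOfG}(\ref{itm:3b}), whose proof exploited the $\Delta_2$/$\nabla_2$-assumptions on $\phi$ to transfer the Lebesgue-domination argument of Lemma \ref{lem:conv} to the dual side.
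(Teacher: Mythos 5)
Your overall plan — work with the relaxed functionals $\overline{\mathcal G}_h$ on $\Lpdiv$ to produce actual minimisers, extract a weak limit, apply joint lower semicontinuity, identify $\overline{\mathcal G}$ with $\mathcal G$ on the effective domain, and conclude via Lemma \ref{lem:PropOfG_h}\eqref{itm:3} — is exactly the paper's. There is, however, one genuine gap in your passage from the liminf inequality to the conclusion $\tau\in\dom\overline{\mathcal G}$.

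You establish $\mathcal G(\tau)=\int_\Omega\Phi^*(x,\tau)\dx-\int_\Omega\tau:\nabla\psi\dx<\infty$ and then write ``Thus $\tau\in\dom\overline{\mathcal G}$.'' That inference does not hold. Lemma \ref{lem:PropOfG_h}\eqref{itm:1} gives $\mathcal G\leq\overline{\mathcal G}$ on $\Lpdiv$ (and equality only on $\Lmdiv$), so finiteness of $\mathcal G(\tau)$ does not control $\overline{\mathcal G}(\tau)$ from above. Lemma \ref{lem:PropOfG_h}\eqref{itm:5} gives only the converse direction, namely that $\tau\in\dom\overline{\mathcal G}$ forces $\overline{\mathcal G}(\tau)=\mathcal G(\tau)<\infty$. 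If the two effective domains were automatically equal, the entire relaxation mechanism of Section \ref{sec:ProofOfLemma3} would be vacuous; the possible strict inclusion $\dom\overline{\mathcal G}\subsetneq\{\tau\in\Lpdiv:\mathcal G(\tau)<\infty\}$ is precisely the dual shadow of the Lavrentiev phenomenon that this argument has to navigate. The paper instead establishes $\sigma\in\dom\overline{\mathcal G}$ \emph{before} invoking the joint lower semicontinuity: it uses the weak lower semicontinuity of $\overline{\mathcal G}$ itself (Lemma \ref{lem:PropOfG_h}\eqref{itm:11}), the domain inclusion $\sigma_j\in\dom\overline{\mathcal G}_{h_j}\subset\dom\overline{\mathcal G}$ from \eqref{itm:4}, the identification $\overline{\mathcal G}(\sigma_j)=\mathcal G(\sigma_j)$ and $\overline{\mathcal G}_{h_j}(\sigma_j)=\mathcal G_{h_j}(\sigma_j)$ from \eqref{itm:5}, and the comparison \eqref{eq:AssumptionDualFunctional} from Lemma \ref{lem:PropertiesOfG}\eqref{itm:2b} to bound $\overline{\mathcal G}(\sigma_j)$ by a multiple of $\overline{\mathcal G}_{h_j}(\sigma_j)$. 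This chain gives $\overline{\mathcal G}(\sigma)<\infty$ directly. Your step based on Lemma \ref{lem:PropertiesOfG}\eqref{itm:3b} is fine for the quantitative liminf estimate, but you additionally need this separate argument through \eqref{itm:11}, \eqref{itm:4}, \eqref{itm:5} and \eqref{eq:AssumptionDualFunctional} to place the limit in $\dom\overline{\mathcal G}$; note also that this is where the $\Delta_2$/$\nabla_2$-assumptions actually enter the proof, not through \eqref{itm:3b} as you suggest in your closing remark.
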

\begin{proof}
Let $(h_j)_{j=1}^\infty \subset \mathbb{R}_{>0}$ be a sequence with $h_j \searrow 0$ as $j\to \infty$. We denote for all $j\in \mathbb{N}$ by $\sigma_{j} \in \Lpdiv$ a minimiser of $\overline{\mathcal{G}}_{h_j}$.
By the growth conditions \eqref{eq:duales} the sequence $(\sigma_{j})_{j=1}^\infty  \subset \Lpdiv$ is uniformly bounded in $\Lpdiv$. Thus, there exists a function $\sigma \in \Lpdiv$ and a weakly convergent subsequence (which we do not relabel)
$\sigma_{j} \rightharpoonup \sigma$ in $\Lpdiv$ as $j\to \infty$.
By Lemma~\ref{lem:PropOfG_h}(\ref{itm:4}) we have~$\sigma_j \in \dom \omg_{h_j}\subset \dom \omg$. 
Lemma~\ref{lem:PropOfG_h}(\ref{itm:11}), the identity in \eqref{eq:AssumptionDualFunctional}, the fact that $\sigma_j$ is a minimiser, and the upper growth condition in \eqref{eq:duales} show 
\begin{align*}
    \omg(\sigma)\le \liminf_{j\to \infty}\omg(\sigma_j) \leq 
    \liminf_{j\to \infty}  C\,\omg_{h_j}(\sigma_j) + |\Omega| < \infty.
\end{align*}
Hence, we have $\sigma\in \dom \omg$ and so Lemma \ref{lem:PropOfG_h}(\ref{itm:5}) yields $\mathcal{G}(\sigma) = \overline{\mathcal{G}}(\sigma)$.  
This identity, $\sigma_j$ being a minimiser of $\mathcal{G}_{h_j}$, and the point-wise convergence (Lemma \ref{lem:PointWiseConv}) shows for all~$\tau \in \Lmdiv$
\begin{align}\label{eq:Proofasd}
    \overline{\mathcal G}(\sigma) = \mathcal{G}(\sigma) \leq \liminf_{j\to \infty} \mathcal G_{h_j}(\sigma_{j}) = \liminf_{j\to \infty} \overline{\mathcal{G}}_{h_j}(\sigma_{j}) \le  \liminf_{j\to \infty} \mathcal{G}_{h_j}(\tau) = \mathcal{G}(\tau).
\end{align}
This inequality and the identity in Lemma \ref{lem:PropOfG_h}\eqref{itm:3} result in  
\begin{align*}
   \min_{\Lpdiv}   \overline{\mathcal G} \leq   \overline{\mathcal G}(\sigma) \leq \inf_{\Lmdiv} \mathcal{G} = \min_{\Lpdiv}   \overline{\mathcal G}.
\end{align*}
Combining this inequality with \eqref{eq:Proofasd} shows 
\begin{align*}
    \inf_{\Lmdiv} \mathcal{G} = \overline{\mathcal{G}}(\sigma) \leq \liminf_{j\to 0} \inf_{\Lmdiv} \mathcal{G}_{h_j}.
\end{align*}
Since this result is true for subsequences of any sequence $h_j\searrow 0$, we have
\begin{align*}
    \inf_{\Lmdiv}\mathcal{G} \leq \liminf_{h\to 0} \inf_{\Lmdiv} \mathcal{G}_h.
\end{align*}
This and the upper bound in Lemma \ref{lem:UpperBoundForGh} lead to the identity in \eqref{eq:tempIdetesa}.
\end{proof}
After these preliminary results we can complete the proof Lemma \ref{lem:UpperBound}.
\begin{proof}[Proof of the claim in~\eqref{eq:ClaimToShow}]
The duality of the primal and dual problems (Lemma \ref{lem:equiProbs}), and the inequality in~\eqref{eq:tempIdetesa} in Lemma \ref{lem:Minimizer} lead to 
\begin{align*}
    &\limsup_{h \to 0} \mathcal{F}_h(u_h) \leq \limsup_{h \to 0} \min_{\A} \mathcal{F}_h = \limsup_{h \to 0} \int_{\partial \Omega} \psi \cdot F\nu \ds  - \inf_{\Lmdiv} \mathcal{\hat{F}}^*_h\\
    & = \int_{\partial \Omega} \psi \cdot F\nu \ds - \liminf_{h \to 0} \inf_{\Lmdiv} \mathcal{G}_h =\int_{\partial \Omega} \psi \cdot F\nu \ds  - \inf_{\Lmdiv} \mathcal{G}\\
    & = \min_{\A} \mathcal{F}.
\end{align*}
This concludes the proof of Lemma~\ref{lem:UpperBound} under \eqref{eq:Assumptionintegrand}--\eqref{eq:nabla2}. 
\end{proof}
\section{Examples on Lavrentiev Gap}\label{sec:ExamplesLavrentiev}
First examples of energies $\mathcal{F}$ with non-standard growth~\eqref{eq:growth_conditions} that experience a Lavrentiev gap go back to Zhikov~\cite{Zhi86}. A key idea in these examples is the construction of a function $u\in W$ which satisfies for any sequence $(u_n)_{n\in \mathbb{N}} \subset H$ with $u_n \to u$ in $W^{1,p_-}(\Omega;\mathbb{R}^m)$
\begin{align*}
\mathcal{F}(u_n) \to  \infty\qquad\text{as } n\to \infty.
\end{align*}
In the following examples we have~$m=1$, $n=2$, and $\Omega = (-1,1)^2$. For all $x=(x_1,x_2) \in \Omega$ the function $u$ reads~$u=(1-x_1^2-x_2^2)\, u_0(x)$ with\\
\begin{minipage}{0.65\textwidth}
\begin{align*}
  u_0(x) =\begin{cases} 1& \text{for } \abs{x_1} < x_2,\\
                        -1& \text{for }\abs{x_1} < -x_2,\\
                      x_2/\abs{x_1}& \text{else}.
             \end{cases}
  \end{align*}
\end{minipage}
\begin{minipage}{0.35\textwidth}
\centering
  \begin{tikzpicture}[scale=1.5]
    \node at (0,1.15) {$u_0$};
    \draw[dashed] (-1,-1) -- (-1,+1) -- (+1,+1) -- (+1,-1) --cycle;
    \draw (-1,-1) -- (1,1);
    \draw (1,-1) -- (-1,1);
    \node at (0,0.5) {$1$};
    \node at (0,-0.5) {$-1$};
\filldraw[pattern=vertical lines] (0,0)--(-1,1)--(-1,-1);
\filldraw[pattern=vertical lines] (0,0)--(1,1)--(1,-1);
\end{tikzpicture}
\end{minipage}
\ \\[-.5em]
Scaling the boundary data leads for sufficiently large scaling parameters $\lambda$
to the Lavrentiev gap phenomenon \cite[Sec.\ 3.3]{BalDieSur20}. Since the singularity of the function~$u$ is concentrated in the origin, examples of this type are called ``one saddle point'' or ``checker board'' setup. 

In the following we summarise known examples for non-autonomous problems caused by a single saddle point. In all these examples we have the 
boundary data $\psi = \lambda u_0$ with $\lambda>0$, the right-hand side $f \equiv 0$, and the following integrands for all $x=(x_1,x_2) \in \Omega$ and $\xi \in \mathbb{R}^2$.
\begin{enumerate}
    \item \textit{Piece-wise constant variable exponent (Zhikov~\cite{Zhi86})}. Let $1<p_-<2<p_+<\infty$, then the integrand reads $\phi(x,\xi)\coloneqq \abs{\xi}^{p(x)}/p(x)$ with\\
\begin{minipage}{0.60\textwidth}
\begin{align*}
     p(x)=\begin{cases}
       p_+\quad\text{ for }|x_1|< |x_2|,\\
       p_-\quad\text{ else}. 
      \end{cases}
 \end{align*}
 \end{minipage}
 \begin{minipage}{0.35\textwidth}
\begin{tikzpicture}[scale=1.5]
    \node at (0,1.15) {Exponent~$p$};
    \draw[dashed] (-1,-1) -- (-1,+1) -- (+1,+1) -- (+1,-1) --cycle;
    \draw (-1,-1) -- (1,1);
    \draw (1,-1) -- (-1,1);
    \node at (0,0.5) {$p_+$};
    \node at (0,-0.5) {$p_+$};
    \node at (-0.5,0) {$p_-$};
    \node at (0.5,0) {$p_-$};
  \end{tikzpicture}
\end{minipage}
\ \\[-.5em]
\item  \textit{Continuous variable exponent (Zhikov~\cite{Zhi95}, H\"asto~\cite{Has05})}.
As in the previous example we have the $p(\cdot)$-Laplacian $\phi(x,\xi) \coloneqq \abs{\xi}^{p(x)}/p(x)$ but with power (with linear interpolation in the dashed regions)\\
\begin{minipage}{0.60\textwidth}
 \begin{align*}
      p_\pm(x) = 2 \pm \frac{1}{2\,(\log(e+\abs{x}^{-1}))^{1/10}}.
  \end{align*}
  \end{minipage}
  \begin{minipage}{0.35\textwidth}
  \begin{tikzpicture}[scale=1.5]\node at (0,1.15) {Exponent~$p$};
      \draw[dashed] (-1,-1) -- (-1,+1) -- (+1,+1) -- (+1,-1) --cycle;
      \fill[pattern=north west lines] (1/2,1) -- (0,0) --
      (+1,1/2) -- (1,1) -- cycle;
      \draw(1/2,1) -- (0,0) -- (+1,1/2);
      \fill[pattern=north east lines] (1/2,-1) -- (0,0) --
      (+1,-1/2) -- (1,-1) -- cycle;
      \draw(1/2,-1) -- (0,0) -- (+1,-1/2);
      \fill[pattern=north east lines] (-1/2,1) -- (0,0) --
      (-1,1/2) -- (-1,1) -- cycle;
      \draw (-1/2,1) -- (0,0) -- (-1,1/2);
      \fill[pattern=north west lines] (-1/2,-1) -- (0,0) --
      (-1,-1/2) -- (-1,-1) -- cycle;
      \draw (-1/2,-1) -- (0,0) -- (-1,-1/2);
      
      \fill[white] (0,0) -- (1/2,1)--(-1/2,1)-- cycle;

      \fill[white] (0,0) -- (1/2,-1)--(-1/2,-1)-- cycle;

      \fill[white] (0,0) -- (1,1/2)--(1,-1/2)-- cycle;
      
      \fill[white] (0,0) -- (-1,1/2)--(-1,-1/2)-- cycle;
    \node at (0,.7) {$p_+$};
      \node at (0,-.7) {$p_+$};
      \node at (.6,0) {$p_-$};
      \node at (-.7,0) {$p_-$};
              \end{tikzpicture}
\end{minipage}
\ \\[-.5em]
\item \textit{Double phase potential (Zhikov~\cite{Zhi95}, Esposito--Leonetti--Mingione \cite{EspLeoMin04})}. This example involves the integrand $\phi(x,\xi)=1/p_-\, \abs{\xi}^{p_-}+a(x)/p_+\,\abs{\xi}^{p_+}$ with powers $1<p_-<2<2+\alpha<p_+<\infty$, where the constant $\alpha\geq 0$ enters the weight \\
\begin{minipage}{0.60\textwidth}
 \begin{align*}
     a(x)=\begin{cases}
       \abs{x_2}^\alpha&\text{for }|x_1| < |x_2|,\\
       0& \text{else}.
     \end{cases}
 \end{align*}   
 \end{minipage}
\begin{minipage}{0.35\textwidth}
 \begin{tikzpicture}[scale=1.5]\node at (0,1.15) {Weight~$a$};
      \draw[dashed] (-1,-1) -- (-1,+1) -- (+1,+1) -- (+1,-1) --cycle;
    \draw (-1,-1) -- (1,1);
    \draw (1,-1) -- (-1,1);
    \node at (0,0.5) {$\abs{x_2}^\alpha$};
    \node at (0,-0.5) {$\abs{x_2}^\alpha$};
    \node at (-0.5,0) {$0$};
    \node at (0.5,0) {$0$};
\end{tikzpicture}
\end{minipage}
\ \\[-.5em]
\item \textit{Borderline case of double phase potential (Balci--Surnachev~\cite{BalSur21})}.
This example involves constants $\beta>1$, $\gamma >1$ and the weight function $a$ from the previous example with $\alpha = 0$. The integrand reads
\begin{align*}
     \phi(x,\xi)=\log^{-\beta}(e+\abs{\xi}) \abs{\xi}^2+a(x)\log^{\gamma}(e+\abs{\xi}) \abs{\xi}^2.
 \end{align*}
\end{enumerate}
Further examples involve fractal contact sets \cite{BalDieSur20} or matrix-valued integrands \cite{FosHruMiz03}. The latter example was already treated numerically in \cite{Ort11}.

\section{Experiments}\label{sec:exp}
In this section we investigate the Lavrentiev gap phenomenon numerically. 
We approximate the $W$-solution by the non-conforming scheme introduced in Section \ref{sec:Crouzeix-Raviart FEM} and the $H$-solution by a conforming scheme with exact (or at least more accurate) quadrature. The convergence of the latter scheme with exact quadrature to the $H$-minimiser has been shown in \cite{CarstensenOrtner10}.
We denote the solutions to the non-conforming scheme by $u_\textup{nc}\in \CR$ and to the conforming scheme by $u_\textup{c}\in \mathcal{L}^1_1(\mathcal{T})$, where $\mathcal{L}^1_1(\mathcal{T})$ denotes the Lagrange space of continuous and piece-wise affine functions. The corresponding energies read $\mathcal{F}_h(u_\textup{nc})$ and $\mathcal{F}(u_\textup{c})$.
Our computations utilise an adaptive mesh refinement strategy driven by the residual-based error indicator introduced in \cite{OrtnerPraetorius11} for the non-conforming scheme. If not mentioned specifically, we solve the non-linear systems with a Newton scheme. 
\subsection{Model Problems}
In this subsection we apply our numerical scheme to the minimisation problems introduced in Section \ref{sec:ExamplesLavrentiev}.
These problems experience a Lavrentiev gap for all sufficiently large  parameters $\lambda > \lambda_0$ in the boundary data $\psi = \lambda u_0$ with some unknown threshold $\lambda_0\geq 0$. We use our numerical method to explore this threshold. This visualises the performance of our scheme and provides some insights for further analytical investigation.

\subsubsection*{Experiment 1 (Piece-wise constant variable exponent)}
In our computations for the first example in Section \ref{sec:ExamplesLavrentiev} we set the exponents $p_- = 3/2$ and $p_+=3$. Our initial triangulation resolves the domains for $p_-$ and $p_+$, which allows for an exact quadrature with piece-wise constant exponents. In particular, \eqref{eq:Assumptionintegrand}--\eqref{eq:nabla2} as well as \eqref{eq:AlternaitveAss} are obviously satisfied.
For the boundary data $\psi = \lambda u_0$ with $\lambda =1$ the exact solution reads $u(x_1,x_2) = x_2$ for all $(x_1,x_2) \in \Omega$. The convergence history plots, displayed on the left-hand side of Figure \ref{fig:exp1}, indicate that there is no Lavrentiev gap for $\lambda\leq 1$ and that there is a gap for $\lambda >1$.
Moreover, the convergence history plot suggests the speed of convergence $\min_W \mathcal{F} - \mathcal{F}(u_\textup{nc}) = \mathcal{O}(\textup{ndof}^{-1})$ and $\mathcal{F}(u_\textup{c}) - \min_H \mathcal{F} = \mathcal{O}(\textup{ndof}^{-1})$.
\begin{figure}
\begin{center}
\begin{tikzpicture}
\begin{axis}[
xmode = log,
ymode = log,
xlabel={ndof},
cycle multi list={\nextlist MyColors},
scale = {0.68},
legend cell align=left,
legend style={font=\scriptsize, legend columns=1,legend pos=south west}
]
	\addplot table [x=ndof,y=DistVal] {ExperimentsPaper/Data/Exp1/t_0.99_bulk_0.3Calc.txt};
		\addplot table [x=ndof,y=DistVal] {ExperimentsPaper/Data/Exp1/t_1.02_bulk_0.3Calc.txt};
		\addplot table [x=ndof,y=DistVal] {ExperimentsPaper/Data/Exp1/t_1.04_bulk_0.3Calc.txt};
	\addplot table [x=ndof,y=DistVal] {ExperimentsPaper/Data/Exp1/t_1.06_bulk_0.3Calc.txt};
	\addplot table [x=ndof,y=DistVal] {ExperimentsPaper/Data/Exp1/t_1.08_bulk_0.3Calc.txt};
		\addplot[dashed,sharp plot,update limits=false] coordinates {(1e2,8e-5) (1e7,8e-10)};
		\addplot[dashed,sharp plot,update limits=false] coordinates {(1e2,3e-6) (1e7,3e-11)};
		\legend{
	{$\lambda = 0.99$},{$\lambda = 1.02$},{$\lambda = 1.04$},{$\lambda = 1.06$},{$\lambda = 1.08$}};
\end{axis}
\end{tikzpicture}
\begin{tikzpicture}
\begin{axis}[
xmode = log,
ymode = log,
xlabel={ndof},
cycle multi list={\nextlist MyColors},
scale = {0.68},
legend cell align=left,
legend style={font=\scriptsize, legend columns=1,legend pos=north east}
]
	\addplot table [x=ndof,y=DistVal] {ExperimentsPaper/Data/Exp3/t_0.3_bulk_0.3Calc.txt};
		\addplot table [x=ndof,y=DistVal] {ExperimentsPaper/Data/Exp3/t_0.4_bulk_0.3Calc.txt};
	\addplot table [x=ndof,y=DistVal] {ExperimentsPaper/Data/Exp3/t_0.5_bulk_0.3Calc.txt};
	\addplot table [x=ndof,y=DistVal] {ExperimentsPaper/Data/Exp3/t_0.6_bulk_0.3Calc.txt};
		\addplot[dashed,sharp plot,update limits=false] coordinates {(1e2,1e-4) (1e7,1e-9)};\label{line:dashed}
		\addplot[dashed,sharp plot,update limits=false] coordinates {(1e2,4e-4) (1e7,4e-9)};
		\legend{
	{$\lambda = 0.3$},{$\lambda = 0.4$},{$\lambda = 0.5$},{$\lambda = 0.6$}};
\end{axis}
\end{tikzpicture}
\caption{Convergence history plot of the distances $\mathcal{F}(u_\textup{c})-\mathcal{F}(u_\textup{nc})$ in Experiment 1 (left) and Experiment 3 (right), where the dashed line \ref{line:dashed} indicates the rate $\mathcal{O}(\textup{ndof}^{-1})$}\label{fig:exp1}
\end{center}
\end{figure}
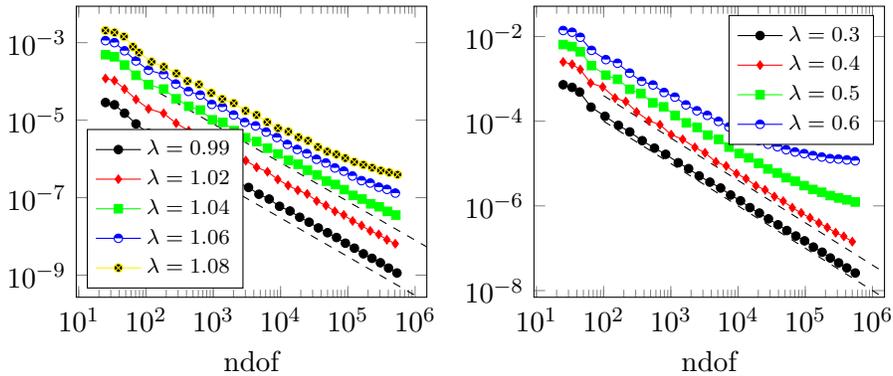
\subsubsection*{Experiment 2 (Continuous variable exponent)}
In this subsection we investigate the second example in Section \ref{sec:ExamplesLavrentiev}. We set the exponents $p_- = 3/2$ and $p_+=3$. The initial triangulation resolves the domains for $p_-$ and $p_+$ and we use a minimum quadrature rule, that is, we choose the point $x_T\in T\in \mathcal{T}$ in \eqref{eq:Assumptionintegrand} as the minimiser $x_T = \argmin_T p$. Hence, \eqref{eq:Assumptionintegrand}--\eqref{eq:nabla2} is satisfied. Figure \ref{fig:Exp2} displays the approximated energies $\mathcal{F}_h(u_\textup{nc})$ of the $W$-solution and $\mathcal{F}(u_\textup{c})$  of the $H$-solution for various scaling parameters $\lambda$. These numerical results suggest a Lavrentiev gap for $\lambda \geq 3$.

In contrast to the previous experiment, the approximated energy $\mathcal{F}_h(u_\textup{nc})$ converges with the rate $\mathcal{O}(\textup{ndof}^{-1/2})$. This reduced rate is caused by the minimum quadrature rule; the loss of midpoint symmetry reduces the order of accuracy. Numerical experiments indicate that the energy $\mathcal{F}(u_\textup{nc})$ does, in contrast to the energy $\mathcal{F}_h(u_\textup{nc})$, \textit{not} converge to the $W$-minimiser. However, computing the minimum $\min_{\CR} \mathcal{F}$ over the non-conforming space with a more accurate quadrature rule leads to faster convergence of the approximated energy to the $W$-minimiser. 
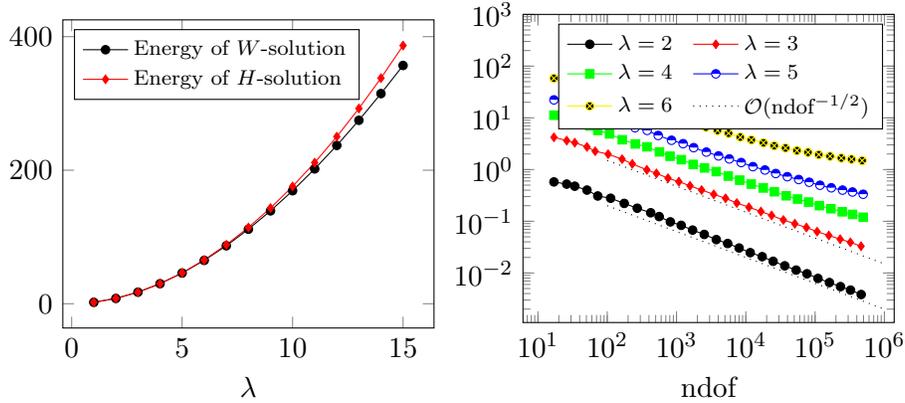
\begin{figure}
\begin{center}
\begin{tikzpicture}
\begin{axis}[
xlabel={$\lambda$},
cycle multi list={\nextlist MyColors},
scale = {.72},
legend cell align=left,
legend style={font=\scriptsize, legend columns=1,legend pos=north west}
]
	\addplot table [x=t,y=EnergyCR]{ExperimentsPaper/Data/Exp2/bulk_0.3.txt};
	\addplot table [x=t,y=EnergyCexact]{ExperimentsPaper/Data/Exp2/bulk_0.3.txt};
		\legend{
	{Energy of $W$-solution},{Energy of $H$-solution}};
\end{axis}
\end{tikzpicture}
\begin{tikzpicture}
\begin{axis}[
xmode = log,
ymode = log,
ymax = 1000,
legend columns=2, 
xlabel={ndof},
cycle multi list={\nextlist MyColors},
scale = {.72},
legend cell align=left,
legend style={font=\scriptsize, legend columns=1,legend pos=north east}
]
	\addplot table [x=ndof,y=DistVal] {ExperimentsPaper/Data/Exp2/t_2_bulk_0.3Calc.txt};
	\addplot table [x=ndof,y=DistVal] {ExperimentsPaper/Data/Exp2/t_3_bulk_0.3Calc.txt};
		\addplot table [x=ndof,y=DistVal] {ExperimentsPaper/Data/Exp2/t_4_bulk_0.3Calc.txt};
		\addplot table [x=ndof,y=DistVal] {ExperimentsPaper/Data/Exp2/t_5_bulk_0.3Calc.txt};
	\addplot table [x=ndof,y=DistVal] {ExperimentsPaper/Data/Exp2/t_7_bulk_0.3Calc.txt};
		\addplot[dotted,sharp plot,update limits=false] coordinates {(1e2,1.5e0) (1e6,1.5e-2)};
	\addplot[dotted,sharp plot,update limits=false] coordinates {(1e2,2e-1) (1e6,2e-3)};
	\legend{
	{$\lambda = 2\ \ $},{$\lambda = 3$},{$\lambda = 4\ \ $},{$\lambda = 5$},{$\lambda = 6\ \ $},{$\mathcal{O}(\textup{ndof}^{-1/2})$}};
\end{axis}
\end{tikzpicture}
\caption{Computed energies of $W$- and $H$-solution (left) and the distance $\mathcal{F}(u_\textup{c}) - \mathcal{F}_h(u_\textup{nc})$ for various scalings $\lambda$}\label{fig:Exp2}
\end{center}
\end{figure}
\subsubsection*{Experiment 3 (Double phase potential)}
In this experiment we approximate the minimisers of the energy in the third example in Section \ref{sec:ExamplesLavrentiev} with parameters $\alpha = 0$ as well as $p_- = 3/2$ and $p_+=3$. The initial triangulation resolves the weight function $a
$, allowing for an exact quadrature with piece-wise constant weights. Thus, \eqref{eq:Assumptionintegrand}--\eqref{eq:nabla2} hold true. The right-hand side in Figure \ref{fig:exp1} displays the convergence history plot of the energies $\mathcal{F}(u_\textup{c}) - \mathcal{F}(u_\textup{nc})$. The plot indicates a Lavrentiev gap for $\lambda \geq 0.4$.
\subsubsection*{Experiment 4 (Borderline case of double phase potential)}
In this experiment we investigate the fourth example of Section \ref{sec:ExamplesLavrentiev} with parameters $\beta = \gamma = 2$. As in the previous experiments we use an initial triangulation that resolves the weight function $a$, allowing for exact quadrature and \eqref{eq:Assumptionintegrand}--\eqref{eq:nabla2}.
Since the Newton scheme struggles with the computation of the discrete minimiser, we utilise a fixed point iteration similar to the one introduced in \cite{DieningFornasierWank17} without regularisation. The paper \cite{BalSur21} proves that the $W$-minimum grows asymptotically slightly slower than the $H$-minimum with respect to the scaling of the boundary data $\psi = \lambda u_0$, leading to a gap for all sufficiently large parameters $\lambda$. Our numerical experiments, displayed in Figure \ref{fig:Exp4a}, indicate a gap for all $\lambda >0$. Moreover, the $W$- and $H$-solution differ significantly.
\begin{figure}
\begin{center}
\begin{tikzpicture}
\begin{axis}[
xlabel={$\lambda$},
cycle multi list={\nextlist MyColors},
scale = {.72},
legend cell align=left,
legend style={font=\scriptsize, legend columns=1,legend pos=north west}
]
	\addplot table [x=t,y=EnergyCR]{ExperimentsPaper/Data/Exp4/bulk_0.3.txt};
	\addplot table [x=t,y=EnergyC]{ExperimentsPaper/Data/Exp4/bulk_0.3.txt};
		\legend{
	{Energy of $W$-solution},{Energy of $H$-solution}};
\end{axis}
\end{tikzpicture}
\begin{tikzpicture}
\begin{axis}[
xmode = log,
ymode = log,
xlabel={ndof},
cycle multi list={\nextlist MyColors},
scale = {.72},
legend cell align=left,
legend style={font=\scriptsize, legend columns=1,legend pos=north east}
]
	\addplot table [x=ndof,y=DistVal] {ExperimentsPaper/Data/Exp4/t_0.1_bulk_0.3Calc.txt};
	\addplot table [x=ndof,y=DistVal] {ExperimentsPaper/Data/Exp4/t_0.2_bulk_0.3Calc.txt};
		\addplot table [x=ndof,y=DistVal] {ExperimentsPaper/Data/Exp4/t_0.3_bulk_0.3Calc.txt};
		\legend{
	{$\lambda = 0.1$},{$\lambda = 0.2$},{$\lambda = 0.3$}};
\end{axis}
\end{tikzpicture}
\hspace*{-.8cm}
\includegraphics[scale=.48]{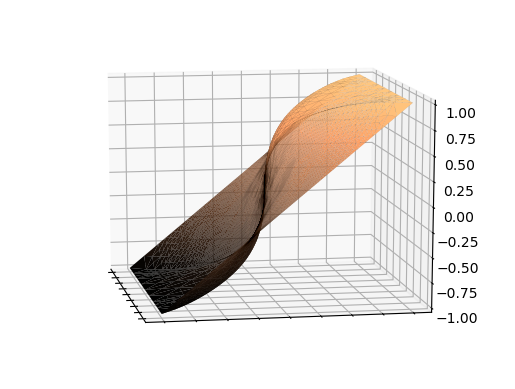}
\hspace*{-.4cm}
\includegraphics[scale=.48]{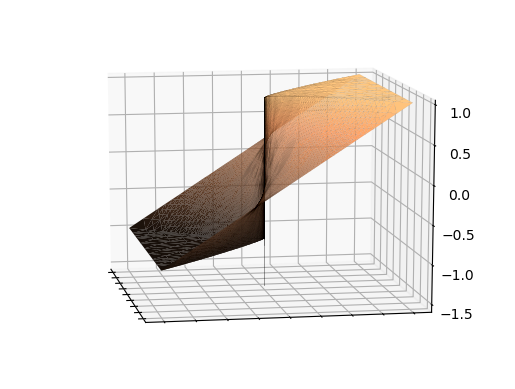}
\caption{Energies of $W$-solution and $H$-solution (top left), the distance $\mathcal{F}(u_\textup{c}) - \mathcal{F}(u_\textup{nc})$ for various scalings $\lambda$ (top right), and the $H$-minimiser $u_\textup{c}$ (bottom left) as well as the $W$-minimiser $u_\textup{nc}$ (bottom right) for $\lambda = 1$}\label{fig:Exp4a}
\end{center}
\end{figure}

\subsection{Bad Quadrature/Geometric Regularisation}\label{subsec:BadQuad}
This experiment investigates the importance of an appropriate quadrature rule. Therefore, we perturb the quadrature in Experiment 1. Our initial triangulation resolves the piece-wise constant exponent $p(\cdot)$. Thus, it has a node in the origin $0\in \Omega$. Let $\omega(0) \subset \mathcal{T}$ denote the nodal patch with respect to this node, that is, the set of all triangles $T\in \mathcal{T}$ with $0\in T$. Our perturbed piece-wise constant exponents read
\begin{align*}
p_\textup{max}|_T = \begin{cases}
p_+ &\text{for } T\in \omega(0),\\
 p&\text{for } T\in \mathcal{T} \setminus \omega(0)
\end{cases}\quad \text{and}\quad
p_\textup{min}|_T = \begin{cases}
p_- &\text{for } T\in \omega(0),\\
 p&\text{for } T\in \mathcal{T} \setminus \omega(0).
\end{cases}
\end{align*}
Notice that these approximations converge to the exact exponent $p$ as the mesh is refined.
We minimise the functionals
\begin{align*}
\mathcal{F}_\textup{max} = \int_\Omega \frac{1}{p_\textup{max}(x)}|\nabla_h \cdot|^{p_\textup{max}(x)}\,\mathrm{d}x,\qquad\mathcal{F}_\textup{min} = \int_\Omega \frac{1}{p_\textup{min}(x)}|\nabla_h \cdot|^{p_\textup{min}(x)}\,\mathrm{d}x
\end{align*}
over the Lagrange and Crouzeix-Raviart space with boundary data $\psi = \lambda u_0$ and $\lambda = 5$. The corresponding solutions read $u_\textup{c}^\textup{max} \in \mathcal{L}^1_1(\mathcal{T})$ and $u_\textup{nc}^\textup{max} \in \CR$ as well as $u_\textup{c}^\textup{min} \in \mathcal{L}^1_1(\mathcal{T})$ and $u_\textup{nc}^\textup{min} \in \CR$. Figure \ref{fig:ExpBadQuad} compares the resulting energies with a reference solution computed in Experiment 1 on the finest triangulation. It indicates that the minimisation of the energy $\mathcal{F}_\textup{max}$ over the conforming and non-conforming space leads to the $H$-minimiser; the minimisation of the energy $\mathcal{F}_\textup{min}$ over the conforming and non-conforming space leads to the $W$-minimiser. 
This shows that the conforming scheme requires exact or at least some suitable quadrature to converge to the $H$-minimiser. Moreover, our computations indicate that it might be possible to design a conforming scheme that converges to the $W$-minimiser by introducing a regularisation near singularities. However, the adaptive scheme experiences difficulties after $\textup{ndof} = \dim \CR$ exceeds $10^4$. Thus, its convergence to the exact $W$-minimiser is unclear.
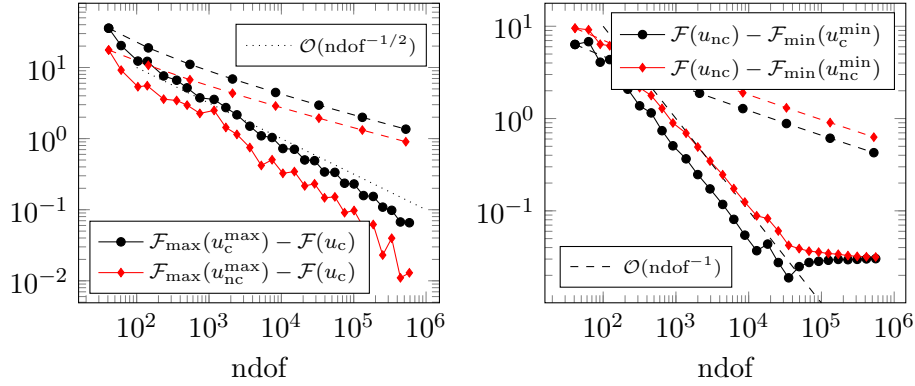
\begin{figure}
\begin{center}
\begin{tikzpicture}
\begin{axis}[
xlabel={$\textup{ndof}$},
xmode = log,
ymode = log,
cycle multi list={\nextlist MyColors},
scale = {.7},
legend cell align=left,
legend style={font=\scriptsize, legend columns=1,legend pos=south west}
]
	\addplot table [x=ndof,y=distC]{ExperimentsPaper/Data/ExpBadQuad/BadQuad_MaxTrue_t_5_bulk_0.3Calc.txt};
	\addplot table [x=ndof,y=distCR]{ExperimentsPaper/Data/ExpBadQuad/BadQuad_MaxTrue_t_5_bulk_0.3Calc.txt};
	\addplot[dotted,sharp plot,update limits=false] coordinates {(1e2,1e1) (1e6,1e-1)};\label{line:dotted}
				\pgfplotsset{cycle list shift=2};
	\addplot table [x=ndof,y=distC]{ExperimentsPaper/Data/ExpBadQuad/BadQuad_MaxTrue_t_5_bulk_1Calc.txt};
	\addplot table [x=ndof,y=distCR]{ExperimentsPaper/Data/ExpBadQuad/BadQuad_MaxTrue_t_5_bulk_1Calc.txt};	
		\legend{
	{$\mathcal{F}_\textup{max}(u_\textup{c}^\textup{max})-\mathcal{F}(u_\textup{c}) $},{$\mathcal{F}_\textup{max}(u_\textup{nc}^\textup{max})-\mathcal{F}(u_\textup{c})$}};
	\node [draw,fill=white,font=\fontsize{7}{5}\selectfont] at (rel axis cs: 0.7,0.86) {\shortstack[l]{
\ref{line:dotted} $\mathcal{O}(\textup{ndof}^{-1/2})$}};
\end{axis}
\end{tikzpicture}
\begin{tikzpicture}
\begin{axis}[
xlabel={$\textup{ndof}$},
xmode = log,
ymode = log,
cycle multi list={\nextlist MyColors},
scale = {.7},
legend cell align=left,
legend style={font=\scriptsize, legend columns=1,legend pos=north east}
]
	\addplot table [x=ndof,y=distC]{ExperimentsPaper/Data/ExpBadQuad/BadQuad_MaxFalse_t_5_bulk_0.3Calc.txt};
	\addplot table [x=ndof,y=distCR]{ExperimentsPaper/Data/ExpBadQuad/BadQuad_MaxFalse_t_5_bulk_0.3Calc.txt};	
		\addplot[dashed,sharp plot,update limits=false] coordinates {(1e2,1e1) (1e5,1e-2)};
				\pgfplotsset{cycle list shift=2};
	\addplot table [x=ndof,y=distC]{ExperimentsPaper/Data/ExpBadQuad/BadQuad_MaxFalse_t_5_bulk_1Calc.txt};
	\addplot table [x=ndof,y=distCR]{ExperimentsPaper/Data/ExpBadQuad/BadQuad_MaxFalse_t_5_bulk_1Calc.txt};	

		\legend{
	{$\mathcal{F}(u_\textup{nc}) - \mathcal{F}_\textup{min}(u_\textup{c}^\textup{min})$},{$\mathcal{F}(u_\textup{nc}) - \mathcal{F}_\textup{min}(u_\textup{nc}^\textup{min})$}};
		\node [draw,fill=white,font=\fontsize{7}{5}\selectfont] at (rel axis cs: 0.28,0.12) {\shortstack[l]{
\ref{line:dashed} $\mathcal{O}(\textup{ndof}^{-1})$}};
\end{axis}
\end{tikzpicture}
\caption{Convergence of computed energies with adaptive (solid line) and uniformly (dashed line) refined meshes to reference value from Experiment 1}\label{fig:ExpBadQuad}
\end{center}
\end{figure}

\subsection{Multiple Saddle Points}\label{eq:ExpLavrGap}
The last example explores the Lavrentiev gap phenomenon for a problem with three saddle points. More precisely, we compute the $W$- and $H$-minimiser of the variable exponent $p(\cdot)$-Laplacian $\int_\Omega |\nabla_h\cdot |^{p(x)}/p(x)\dx$ on the domain $\Omega = (-1,5)\times (-1,1)$ with boundary data $\psi(x_1,x_2) = \lambda x_2$ for all $(x_1,x_2) \in \overline{\Omega}$ and parameters $\lambda >0$. The piece-wise constant exponent (visualised in Figure~\ref{fig:MultiSat}) attains the values $p_- = 3/2$ and $p_+ = 3$ and reads for all $(x_1,x_2)\in \Omega$
\begin{align*}
p(x_1,x_2) = \begin{cases}
 p_-&\text{for } |x_1| < |x_2|\text{ and }x_1<1,\\
 p_-&\text{for } |x_1-2| < |x_2|\text{ and }1\leq x_1<3,\\
 p_-&\text{for } |x_1-4| < |x_2|\text{ and }3\leq x_1,\\
 p_+&\text{else}.
\end{cases}
\end{align*} 
For $\lambda = 1$ the exact minimiser reads $u(x_1,x_2) = x_2$ for all $(x_1,x_2) \in \Omega$.
The initial triangulation resolves the exponent $p$. Thus, we can apply an exact quadrature with piece-wise constant exponents satisfying \eqref{eq:Assumptionintegrand}--\eqref{eq:nabla2}. 
Figure \ref{fig:MultiSat} displays a convergence history plot of the energies for various $\lambda$ as well as a plot of the $H$- and $W$-minimiser for $\lambda = 5$.
As in Experiment 1, it seems that there is a gap for $\lambda >1$. The $W$-minimiser seems to jump for $\lambda >1$ in all three saddle points.
\begin{figure}
  \centering
\begin{tikzpicture}[scale = .85]
    \node at (2,1.2) {Variable exponent~$p$};
    \draw[dashed] (-1,-1) -- (-1,+1) -- (+5,+1) -- (+5,-1) --cycle;
    \draw (-1,-1) -- (1,1);
    \draw (1,-1) -- (-1,1);
    \draw (1,1) -- (3,-1);
    \draw (1,-1) -- (3,1);
    \draw (3,1) -- (5,-1);
    \draw (3,-1) -- (5,1);

    \node at (4,0.6) {$p_+$};
    \node at (4,-0.6) {$p_+$};
    \node at (2,0.6) {$p_+$};
    \node at (2,-0.6) {$p_+$};
    \node at (0,0.6) {$p_+$};
    \node at (0,-0.6) {$p_+$};
    \node at (-0.6,0) {$p_-$};
    \node at (1,0) {$p_-$};
    \node at (3,0) {$p_-$};
    \node at (4.6,0) {$p_-$};

    \node[left] at (-1,1) {$1$};
    \node[left] at (-1,-1) {$-1$};
    \node[left] at (-1,0) {$0$};
    
    \node[below] at (-1,-1) {$-1$};
    \node[below] at (1,-1) {$1$};
    \node[below] at (3,-1) {$3$};
    \node[below] at (5,-1) {$5$};

    \node at (0,-3) {};
\end{tikzpicture}
\begin{tikzpicture}
\begin{axis}[
xmode = log,
ymode = log,
xlabel={ndof},
cycle multi list={\nextlist MyColors},
scale = {0.72},
legend cell align=left,
legend style={font=\scriptsize, legend columns=1,legend pos=north east}
]
	\addplot table [x=ndof,y=DistVal] {ExperimentsPaper/Data/ExpMultSaddles/t_1.05_bulk_0.3Calc.txt};
	\addplot table [x=ndof,y=DistVal] {ExperimentsPaper/Data/ExpMultSaddles/t_1.1_bulk_0.3Calc.txt};
	\addplot table [x=ndof,y=DistVal] {ExperimentsPaper/Data/ExpMultSaddles/t_1.15_bulk_0.3Calc.txt};
	\addplot[dashed,sharp plot,update limits=false] coordinates {(1e2,1e-3) (1e7,1e-8)};
	\legend{
	{{$\lambda = 1.05$},{$\lambda = 1.1$},{$\lambda = 1.15$},{$\mathcal{O}(\textup{ndof}^{-1})$}}};
\end{axis}
\end{tikzpicture}
\hspace*{-.8cm}
\includegraphics[scale=.5]{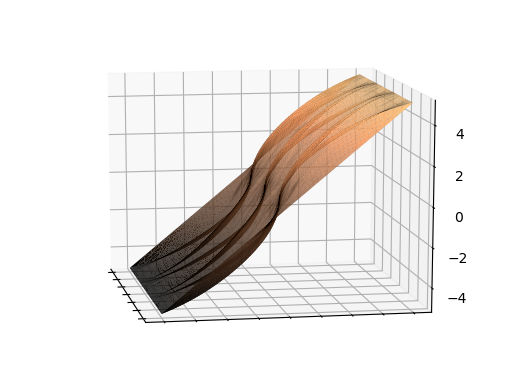}
\includegraphics[scale=.5]{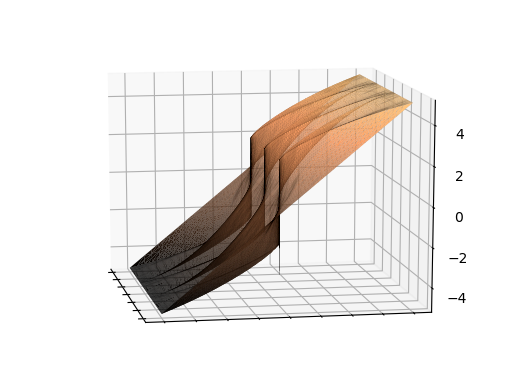}
\caption{Variable exponent $p$ (top left),  the distance $\mathcal{F}(u_\textup{c}) - \mathcal{F}(u_\textup{nc})$ for various scalings $\lambda$ (top right), and the $H$-minimiser $u_\textup{c}$ (bottom left) as well as the $W$-minimiser $u_\textup{nc}$ (bottom right) for $\lambda = 5$}\label{fig:MultiSat}
\end{figure}

\section{Conclusion}
In this paper we have successfully adapted the Crouziex--Raviart finite element scheme to approximate variational problems with non-autonomous integrands even in the presence of the Lavrentiev gap phenomenon. We have identified assumptions on which convergence is guaranteed, and have demonstrated numerically on wide range of test cases that the scheme is practical and can reliably predict the existence (or non-existence) of Lavrentiev gaps. The examples we considered here are of primary interest to the theoretical study of regularity of solutions to variational problems. Indeed we hope that our numerical scheme could be employed more generally towards refining and extending the analytical results based on which we chose our examples. 

More generally, however, our results provide strong new evidence for the advantages of non-conforming methods in the numerical solution of difficult variational problems. On that theme, we note that a long-standing open problem is the extension of our convergence results to poly-convex or even quasi-convex integrands.
Further investigation might involve the use of problem dependent strategies, as for example done in~\cite{BreDieSch15,DieningRuszika07,EbmeyerLiu05} for conforming and in~\cite{Bar21,Gudi10,CarstensenLiu15,CarstensenPeterseimSchedensack12} for non-conforming schemes for problems without Lavrentiev gap, to conclude rates of convergence and to relax some assumptions like the growth condition \eqref{eq:growth_conditions} for specific problems. 
\appendix
\section{Variational Calculus Background}
This paper, in particular Section \ref{sec:ProofOfLemma3}, relies on numerous analytical results. We present these results and needed calculations in this appendix.
Most of these calculations rely the following properties of convex functions $\phi,\chi$ and their convex conjugates $\phi^*,\chi^*$ (see \eqref{eq:defConvConj} for a definition).
\begin{proposition} \label{lem:conj}
Let~$a,b>0$ be positice numbers, then there holds
\begin{enumerate}
\item $(a\, \phi(b \xi))^* = a\, \phi^*(\xi/(ab))$ for all $\xi\in \mathbb{R}^{m\times n}$,
\item if~$\chi\le \phi$, then~$\phi^* \leq \chi^*$,
\item $(\phi+a)^*=\phi^*-a$.
\end{enumerate}
\end{proposition}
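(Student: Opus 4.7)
The plan is to verify each of the three identities by a direct unwinding of the definition
$\phi^*(\xi) = \sup_{t \in \mathbb{R}^{m\times n}} \{\xi : t - \phi(t)\}$
and standard manipulations of the supremum; no machinery beyond elementary calculus on $\mathbb{R}^{m\times n}$ is required, so I do not expect a real obstacle at any step.

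For item~(1), I would set $\psi(\xi) := a\,\phi(b\xi)$ and write out $\psi^*(\eta)$ from the definition. The substitution $s = bt$ in the supremum is a bijection precisely because $b>0$, and the factor $a>0$ can be pulled out of the supremum without flipping inequalities. After these two manipulations the remaining supremum is exactly $\phi^*(\eta/(ab))$, which gives the claim. The positivity hypotheses on both $a$ and $b$ are exactly what make this bookkeeping legitimate.

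For item~(2), the key observation is monotonicity reversal: if $\chi(t) \leq \phi(t)$ pointwise, then $\xi:t - \phi(t) \leq \xi:t - \chi(t)$ for every $\xi,t$, so taking the supremum in $t$ yields $\phi^*(\xi) \leq \chi^*(\xi)$. For item~(3), the additive constant $a$ can be pulled outside the supremum, producing $(\phi+a)^*(\xi) = \phi^*(\xi) - a$ immediately.

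The only mild care to take is the bookkeeping in~(1), where one must track the constants $a$ and $b$ through both the change of variables and the factorisation of the supremum; once the substitution is written down carefully, the result drops out. Items~(2) and~(3) are essentially one-line arguments once the definition is expanded.
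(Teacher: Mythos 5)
Your proof is correct and complete. The paper itself gives no proof, deferring to \cite{KraRut61} and \cite[Lem.\ 2.4.3]{HarHas19}, but the direct verifications you supply — the change of variables $s = bt$ together with factoring out $a>0$ in (1), monotonicity reversal of $\sup$ in (2), and pulling the additive constant outside the supremum in (3) — are exactly the elementary arguments those references would carry out.
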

\begin{proof}
These properties follow from the definition of the convex conjugate and simple calculations, see~\cite{KraRut61} or~\cite[Lem.\ 2.4.3]{HarHas19} for details.
\end{proof}

In the remainder of this appendix $\phi:\Omega\times \mathbb{R}^{m\times n}\to \mathbb{R}$ is convex in its second component and satisfies the following generalised version of the growth condition in \eqref{eq:growth_conditions}. There exists a function $c_0 \in L^1(\Omega)$ and positive constants $c_1,c_2<\infty$ such that for almost all $x\in \Omega$ and all $\xi \in \mathbb{R}^{m\times n}$
\begin{align}
\label{eq:growth_conditions2}
-c_0(x) +c_1 \abs{\xi}^{p_-}\le \phi(x,\xi)\le c_2\abs{\xi}^{p_+} +c_0(x).
\end{align}

In addition, certain results require the following generalised versions of the assumptions in \eqref{eq:Delta_2phi} and \eqref{eq:nabla2}.
\begin{definition}[$\Delta_2$- and $\nabla_2$-condition]\label{def:gener_del}
We say that~$\phi$ satisfies the generalised~$\Delta_2$-condition if there exists a constant~$C$ and a function~$\tilde{C}\in L^1(\Omega)$ for all~$\xi \in \mathbb{R}^{m\times n}$ and almost all $x\in \Omega$  such that
\begin{align}\label{eq:Delta2Gen}
    \phi(x,2\xi)\le C\phi(x,\pm\xi)+\tilde{C}(x).
\end{align}
We say that~$\phi$ satisfies the generalised~$\nabla_2$-conditions if~$\phi^*$ satisfies the generalised~$\Delta_2$-condition, that is, there exists a constant~$K$ and a function~$\tilde{K}\in L^1(\Omega)$ for all~$\xi \in \mathbb{R}^{m\times n}$ and almost all $x\in \Omega$  such that
\begin{align}\label{eq:Nabla2Gen}
    \phi^*(x,2\xi)\le K\phi^*(x,\pm\xi)+\tilde{K}(x).
\end{align}
\end{definition}
The following proposition shows the equivalence of the assumption in \eqref{eq:nabla2} and the~$\nabla_2$-condition.
\begin{proposition}[$\nabla_2$-condition]\label{lem:Nabla}
Let $x\in \Omega$ and $\xi \in \mathbb{R}^{m\times n}$. Then the inequality in \eqref{eq:Nabla2Gen} is equivalent to
\begin{align}\label{eq:A2Gen}
    K\phi(x,2 \xi)& \leq \phi(x, \pm K \xi)+\tilde{K}(x).
\end{align}

\end{proposition}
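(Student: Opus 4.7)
The plan is to prove the equivalence pointwise in $x \in \Omega$, treating $\phi(x, \cdot)$ as a fixed proper convex lsc function on $\mathbb{R}^{m \times n}$, so that the Fenchel--Moreau biconjugation identity $\phi^{**}(x, \cdot) = \phi(x, \cdot)$ is available. Both directions then reduce to taking convex conjugates of the given inequality and applying the three elementary rules collected in Proposition~\ref{lem:conj}. Since Proposition~\ref{lem:conj}(ii) reverses the inequality under conjugation, the right-hand side of one inequality becomes the (scaled) left-hand side of the other, and no new analytical input is needed.

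For the forward direction $\eqref{eq:Nabla2Gen} \Rightarrow \eqref{eq:A2Gen}$ (choosing the $+$ sign for definiteness), I would view \eqref{eq:Nabla2Gen} as a pointwise inequality of convex functions in $\xi$ and take conjugates. Using rule (i) of Proposition~\ref{lem:conj} with $a=1$, $b=2$, the conjugate of $\xi \mapsto \phi^*(x, 2\xi)$ is $\eta \mapsto \phi^{**}(x, \eta/2) = \phi(x, \eta/2)$. Using rule (i) with $a = K$, $b=1$ together with the shift rule (iii), the conjugate of $\xi \mapsto K\phi^*(x, \xi) + \tilde{K}(x)$ equals $\eta \mapsto K\phi(x, \eta/K) - \tilde{K}(x)$. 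The reversed inequality $K\phi(x, \eta/K) - \tilde{K}(x) \leq \phi(x, \eta/2)$ becomes \eqref{eq:A2Gen} after the substitution $\eta = 2K\xi$. The reverse implication $\eqref{eq:A2Gen} \Rightarrow \eqref{eq:Nabla2Gen}$ is completely symmetric: take conjugates of \eqref{eq:A2Gen} in its second argument, apply the same three rules, and substitute back.

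The sign ambiguity $\pm$ is handled by noticing that the reflected function $\xi \mapsto \phi(x, -\xi)$ has conjugate $\xi \mapsto \phi^*(x, -\xi)$, so running the argument above with $\phi$ replaced by its reflection interchanges the two sign choices on both sides simultaneously. The only non-trivial bookkeeping is keeping track of how the scaling factor $K$ splits between a multiplicative prefactor and the argument rescaling under conjugation; this is precisely the reason the substitution $\eta = 2K\xi$ produces both the $2\xi$ and the $K\xi$ arguments in \eqref{eq:A2Gen} from the $2\xi$ and $\xi$ arguments in \eqref{eq:Nabla2Gen}. I do not anticipate any real obstacle beyond this algebraic matching.
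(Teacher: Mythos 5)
Your proof is correct and takes essentially the same approach as the paper: both sides conjugate the given inequality, apply the three elementary rules of Proposition~\ref{lem:conj}, and rescale the argument. The only cosmetic difference is which implication is written out explicitly (you do $\eqref{eq:Nabla2Gen}\Rightarrow\eqref{eq:A2Gen}$, the paper does the converse and hand-waves the rest), and you make the tacit use of Fenchel--Moreau biconjugation $\phi^{**}=\phi$ explicit.
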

\begin{proof}
Let $x \in \Omega$ and $\xi \in \mathbb{R}^{m\times n}$.
Lemma \ref{lem:conj} and \eqref{eq:A2Gen} yield
\begin{align*}
&\phi^*\left(x,\pm K^{-1} \xi\right) - \tilde{K}(x) = (\phi(x,\pm K \xi) + \tilde{K}(x))^*\\
&\quad \leq (K\, \phi(x,2\xi))^* = K\,\phi^*\left(x, (2K)^{-1} \xi \right).
\end{align*}
Substituting $\xi$ by $2K\xi$ implies \eqref{eq:Nabla2Gen}. A similar calculation shows that \eqref{eq:Nabla2Gen} implies \eqref{eq:A2Gen}.
\end{proof}
In order to include boundary conditions and right-hand side, we have to shift the integrand. The following propositions shows that these shifts do not cause significant difficulties, since the resulting integrands are equivalent up to a $L^1(\Omega)$-function $\tilde{c}_1$.
\begin{proposition}[Weak equivalence]\label{prop:A2}
Suppose that the integrand $\phi$ satsifies the $\nabla_2$-condition \eqref{eq:Nabla2Gen}.
Let $F \in L^{p_{-}'}(\Omega;\setR^{m\times n})$ and define the integrand 
\begin{align*}
    \phi_1(x,\xi):=\phi(x,\xi)+F(x):\xi\qquad\text{for all }x\in \Omega \text{ and }\xi \in \mathbb{R}^{m\times n}.
\end{align*}
Then there exist a function $\tilde{c}_1 \in L^1(\Omega)$ such that we have for almost all $x\in \Omega$ and all $\xi \in \mathbb{R}^{m\times n}$
\begin{align*}
-\tilde{c}_1(x)+\frac 12\phi(x,\xi)\le \phi_1(x,\xi) \le \frac 32 \phi(x,\xi) +\tilde{c}_1(x).
\end{align*}
\end{proposition}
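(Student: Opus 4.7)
The plan is to combine the Fenchel-Young inequality, applied to a rescaled version of $\phi$, with the growth condition \eqref{eq:growth_conditions2} to absorb the linear perturbation $F:\xi$ into $\phi$ modulo an $L^1(\Omega)$ remainder. Scaling is the key point: applying Young naively with $\phi$ itself would give $\phi_1 \leq 2\phi + \phi^*(x,F)$, which loses the factor $3/2$ on the upper side and gives no useful lower bound at all; the correct choice is to work with $\tfrac{1}{2}\phi$.

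Concretely, by Proposition \ref{lem:conj} the convex conjugate of $\psi(x,\cdot) := \tfrac{1}{2}\phi(x,\cdot)$ is $\psi^*(x,\zeta) = \tfrac{1}{2}\phi^*(x,2\zeta)$. The Fenchel-Young inequality $\xi:\zeta \leq \psi(x,\xi) + \psi^*(x,\zeta)$ applied with $\zeta = \pm F(x)$ then yields
\begin{align*}
\pm F(x):\xi \leq \tfrac{1}{2}\phi(x,\xi) + \tfrac{1}{2}\phi^*(x, \pm 2F(x)).
\end{align*}
Adding $\phi(x,\xi)$ in the $+$ case gives $\phi_1(x,\xi) \leq \tfrac{3}{2}\phi(x,\xi) + \tfrac{1}{2}\phi^*(x, 2F(x))$, while rearranging the $-$ case gives $\phi_1(x,\xi) \geq \tfrac{1}{2}\phi(x,\xi) - \tfrac{1}{2}\phi^*(x, -2F(x))$. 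Both inequalities are then handled by the single choice
\begin{align*}
\tilde{c}_1(x) := \tfrac{1}{2}\bigl( \phi^*(x, 2F(x)) + \phi^*(x, -2F(x)) \bigr).
\end{align*}

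The remaining task is to verify $\tilde{c}_1 \in L^1(\Omega)$. The lower bound in \eqref{eq:growth_conditions2}, combined with the order-reversing property and the scaling rule from Proposition \ref{lem:conj}, yields $\phi^*(x,\zeta) \leq c_0(x) + c_1^{1-p_-'}|\zeta|^{p_-'}$ for almost every $x$, which is the pointwise analog of \eqref{eq:GrowthDual}. Evaluating at $\zeta = \pm 2F(x)$ and using $F \in L^{p_-'}(\Omega;\mathbb{R}^{m\times n})$ together with $c_0 \in L^1(\Omega)$ gives $\phi^*(x,\pm 2F(x)) \in L^1(\Omega)$ and thus $\tilde{c}_1 \in L^1(\Omega)$. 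There is no serious obstacle in the argument; the only subtlety worth emphasising is the choice of the $\tfrac{1}{2}$ rescaling of $\phi$ before applying Young, which is precisely what produces the symmetric $\tfrac{1}{2}$ and $\tfrac{3}{2}$ coefficients stated in the proposition.
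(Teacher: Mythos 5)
Your proof is correct and, algebraically, it is the same computation as the paper's: the paper applies Young's inequality in the form $F(x):2\xi\le\phi(x,\xi)+\phi^*(x,2F(x))$ and then divides by $2$, while you rescale $\phi$ by $\tfrac12$ before conjugating; the two are identical manipulations. Where you diverge is in how you bound $\phi^*(x,\pm2F(x))$: the paper first invokes the $\nabla_2$-condition \eqref{eq:Nabla2Gen} to get $\phi^*(x,2F(x))\le K\phi^*(x,F(x))+\tilde K(x)$ and only then applies the growth bound \eqref{eq:GrowthDual}, whereas you evaluate the growth bound directly at $\pm2F(x)$. Your route is a genuine simplification: under the growth hypothesis \eqref{eq:growth_conditions2} one already has $\phi^*(x,\zeta)\le c_0(x)+C|\zeta|^{p_-'}$, so $\phi^*(x,\pm2F(x))\in L^1(\Omega)$ follows immediately and the $\nabla_2$-hypothesis in the proposition is in fact redundant. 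You are also more careful than the paper on the sign issue: the paper writes $|F(x):2\xi|\le\phi(x,\xi)+\phi^*(x,2F(x))$, which strictly speaking requires both $\phi^*(x,2F(x))$ and $\phi^*(x,-2F(x))$ (this is implicit in the paper's use of the $\pm$ in \eqref{eq:Nabla2Gen} but not spelled out), and your symmetric choice $\tilde c_1(x)=\tfrac12\bigl(\phi^*(x,2F(x))+\phi^*(x,-2F(x))\bigr)$ handles it cleanly.
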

\begin{proof}
Let $x\in \Omega$ and $\xi \in \mathbb{R}^{m\times n}$.
  Young's inequality, the $\nabla_2$-condition \eqref{eq:Nabla2Gen}, and the growth \eqref{eq:growth_conditions2} (leading to growth conditions for $\phi^*$, cf.~\eqref{eq:GrowthDual}) yield
  \begin{align*}
      &2\abs{F(x):\xi} \le  \abs{F(x):2\xi} \le \phi(x,\xi)+\phi^*(x,2F(x))\\
      &\le (\phi(x,\xi)+K\phi^*(x,F(x)) + \tilde{K}(x))\le \phi(x,\xi) + K c_1^{1-p_-'}\abs{F(x)}^{p_{-}'} + K {c}_0(x).
  \end{align*}
Thus, the function $\tilde{c}_1 \coloneqq K( c_1^{1-p_-'}\abs{F}^{p_{-}'} +{c}_0)/2 \in L^1(\Omega)$ satisfies
  \begin{align*}
 -\frac 12 \phi(x,\xi) -\tilde{c}_1(x)
     \le F(x):\xi \le  \frac 12 \phi(x,\xi) +\tilde{c}_1(x).
  \end{align*}
  Using this inequalities and the definition of $\phi_1$ concludes the proof.
\end{proof}
The following result shows that pointwise convergence implies pointwise convergence of the convex conjugate.
\begin{proposition}[{\cite[Lem.\ 5.4]{PasKhBal11}}]\label{lem:conjuageconv}
For all $h>0$ let $\phi_h:\Omega \times \mathbb{R}^{m\times n} \to \mathbb{R}$ be a function that satisfies the nonstandard growth condition~\eqref{eq:growth_conditions2} and that is is convex in its second component. Then pointwise convergence of the primal functions~$\phi_h$ implies pointwise convergence of the corresponding convex conjugates~$\phi^*_h$ in the sense that for $x\in \Omega$ and $\xi \in \mathbb{R}^{m\times n}$
\begin{align*}
    \lim_{h \to 0} \phi_h(x,\xi) = \phi (x,\xi)\qquad  \text{implies}  \qquad \lim_{h\to 0} \phi^*_h(x,\xi) = \phi^*(x,\xi). 
\end{align*}
\end{proposition}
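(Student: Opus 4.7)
The plan is to reduce the supremum defining each convex conjugate to a supremum over a fixed compact set (independent of $h$), and then pass to the limit by uniform convergence.

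Fix $x \in \Omega$ and $\xi \in \mathbb{R}^{m \times n}$. The first step is to observe that the coercivity in \eqref{eq:growth_conditions2} is uniform in $h$, so
\begin{equation*}
\xi : t - \phi_h(x,t) \;\leq\; |\xi|\,|t| - c_1 |t|^{p_-} + c_0(x),
\end{equation*}
which tends to $-\infty$ as $|t| \to \infty$ because $p_- > 1$. The upper bound in \eqref{eq:growth_conditions2} gives $\phi_h(x,0) \leq c_0(x)$, hence $\xi : 0 - \phi_h(x,0) \geq -c_0(x)$ uniformly in $h$. Choosing $R = R(x,\xi)$ large enough that $|\xi|\,|t| - c_1|t|^{p_-} + c_0(x) < -c_0(x)$ whenever $|t| > R$, the supremum defining $\phi_h^*(x,\xi)$ — and likewise $\phi^*(x,\xi)$ — is attained on the compact set $B_R \coloneqq \{t \in \mathbb{R}^{m \times n} : |t| \leq R\}$; that is,
\begin{equation*}
\phi_h^*(x,\xi) \;=\; \sup_{t \in B_R}\bigl\{\xi : t - \phi_h(x,t)\bigr\} \quad \text{for all sufficiently small } h \geq 0.
\end{equation*}

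The second step is to upgrade the pointwise convergence $\phi_h(x,\cdot) \to \phi(x,\cdot)$ to uniform convergence on $B_R$. Since $\phi_h(x,\cdot)$ and $\phi(x,\cdot)$ are real-valued convex functions on all of $\mathbb{R}^{m \times n}$ (thanks to \eqref{eq:growth_conditions2}), the classical theorem on pointwise convergence of convex functions (Rockafellar, Theorem 10.8) yields that $\phi_h(x,\cdot) \to \phi(x,\cdot)$ uniformly on every compact subset of the interior of the effective domain of $\phi(x,\cdot)$, which here is the whole space. In particular, uniform convergence on $B_R$ holds.

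The third step is routine: uniform convergence of the family $t \mapsto \xi : t - \phi_h(x,t)$ on the compact set $B_R$ transfers to the supremum, so
\begin{equation*}
\phi_h^*(x,\xi) \;=\; \sup_{t \in B_R}\bigl\{\xi : t - \phi_h(x,t)\bigr\} \;\xrightarrow[h \to 0]{}\; \sup_{t \in B_R}\bigl\{\xi : t - \phi(x,t)\bigr\} \;=\; \phi^*(x,\xi).
\end{equation*}
The only nontrivial ingredient is the second step, and it is a standard consequence of convexity plus finiteness; the uniform-in-$h$ coercivity from \eqref{eq:growth_conditions2} is what lets us fix the truncation radius $R$ once and for all, which is the crucial bookkeeping point.
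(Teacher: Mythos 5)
The paper does not actually prove this proposition; it cites \cite[Lem.\ 5.4]{PasKhBal11}, so there is no in-paper argument to compare against. Your proof is correct and follows the standard route: the uniform (in $h$) coercivity supplied by \eqref{eq:growth_conditions2} localizes the supremum defining each Fenchel conjugate to a ball $B_R$ whose radius depends only on $(x,\xi)$, not on $h$; the classical theorem that pointwise convergence of finite convex functions on an open convex set is automatically locally uniform (Rockafellar, Thm.~10.8) upgrades the hypothesis to uniform convergence on $B_R$; and uniform convergence of $t \mapsto \xi:t - \phi_h(x,t)$ on $B_R$ passes to the supremum. The one ingredient worth stating rather than leaving implicit is that the limit $\phi(x,\cdot)$ is itself a finite convex function on all of $\mathbb{R}^{m\times n}$ — this is automatic, since a pointwise limit of convex functions is convex and the two-sided bounds in \eqref{eq:growth_conditions2} are inherited in the limit — because Rockafellar's theorem needs the limit to be finite on the open set in question, and because you use the same bounds to justify truncating the supremum for $\phi^*$ (the $h=0$ case) to the same ball $B_R$.
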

We conclude this appendix with a result involving  the relaxation (see \eqref{eq:RelaxedG}) of the energy
$ \mathcal{G} \coloneqq \int_\Omega \phi^*(x,\cdot)\dx$. 

Notice that Zhikov states the result under  $\Delta_2$-condition (the $L^1(\Omega)$ function in \eqref{eq:Delta2Gen} is replaced by a constant) in \cite{Zhi11}. A review of his proof shows that the result is valid under the weaker assumption in \eqref{eq:Delta2Gen} as well, leading to the following result. 
\begin{proposition}[{\cite[Thm.\ 11.10]{Zhi11}}]\label{thm:Zhi1110}
If~$\phi$ satisfies the~$\Delta_2$-condition \eqref{eq:Delta2Gen}, then it holds that
\begin{align*}
    \overline{\mathcal{G}}= \mathcal{G}\qquad \text{on }\dom \overline{\mathcal{G}}.
\end{align*}
\end{proposition}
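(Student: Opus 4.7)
The plan is to establish the equality $\overline{\mathcal{G}} = \mathcal{G}$ on $\dom \overline{\mathcal{G}}$ by proving the two inequalities separately, following the strategy of Zhikov in~\cite[Thm.~11.10]{Zhi11}. The only change to his argument is that every appearance of the classical $\Delta_2$-constant $C_0$ is replaced by the $L^1(\Omega)$-function $\tilde C$ of~\eqref{eq:Delta2Gen}. Since this term enters only as an additive perturbation in his estimates, each bound of the form $C_0\,|\Omega|$ becomes the finite quantity $\int_\Omega \tilde C\,dx$, and the rest of the argument passes through unchanged.

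For the inequality $\mathcal{G}(\tau)\le \overline{\mathcal{G}}(\tau)$ I would take any admissible sequence $(\tau_k)\subset \Lmdiv$ with $\tau_k\to \tau$ strongly in $L^{p_+'}$, extract an almost everywhere convergent subsequence, and apply Fatou's lemma to the nonnegative integrands $\phi^*(x,\tau_k)+\bar c_0(x)$, where the lower bound on $\phi^*$ is provided by \eqref{eq:duales} and $\bar c_0\in L^1(\Omega)$. Taking the infimum over all admissible sequences yields $\mathcal{G}(\tau)\le \overline{\mathcal{G}}(\tau)$.

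For the reverse inequality $\overline{\mathcal{G}}(\tau)\le \mathcal{G}(\tau)$ on $\dom \overline{\mathcal{G}}$, the task is to construct an explicit recovery sequence. Given $\tau\in \Lpdiv$ with $\mathcal{G}(\tau)<\infty$, I would first extend $\tau$ to a divergence-free field on $\mathbb{R}^n$ by a Bogovski-type correction that preserves the solenoidal constraint near $\partial\Omega$, mollify with a standard kernel to obtain smooth solenoidal fields $\tau_\varepsilon\in L^\infty\cap\{\textup{div}\,\cdot=0\}\subset \Lmdiv$, and verify the strong convergence $\tau_\varepsilon\to \tau$ in $L^{p_+'}$ by standard mollification arguments.

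The main obstacle is the energy convergence $\limsup_\varepsilon \mathcal{G}(\tau_\varepsilon)\le \mathcal{G}(\tau)$: because $\phi$ only satisfies $\Delta_2$, the conjugate $\phi^*$ satisfies only the weaker $\nabla_2$-condition (Proposition~\ref{lem:Nabla}), and Vitali's convergence theorem cannot be applied directly to $\phi^*(\cdot,\tau_\varepsilon)$. Zhikov's approach is to use the Fenchel--Young representation $\phi^*(x,\tau)=\sup_\xi\{\tau:\xi-\phi(x,\xi)\}$ together with a near-optimal selector $\xi^*(x)$ whose truncations and mollifications are controlled by the $\Delta_2$-condition on $\phi$. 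The divergence-free structure of $\tau_\varepsilon$ and $\tau$ then enters by integration by parts in the mixed term $\int \tau_\varepsilon:\xi^*\,dx$, and a diagonal argument in the mollification parameter and the truncation level delivers the limsup inequality. I expect this diagonalisation and the uniform control of the truncation remainders to be the most delicate step, and it is precisely there that one has to check that the passage from the constant $C_0$ to the $L^1$-function $\tilde C$ leaves every integrability estimate intact.
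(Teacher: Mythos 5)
The paper's own treatment of this proposition is not a proof at all: it cites Zhikov's Theorem~11.10 and merely remarks that a review of his proof shows every occurrence of the constant $C_0$ may be replaced by the $L^1(\Omega)$-function $\tilde C$. Your opening paragraph makes exactly that observation, so at the level of what the paper actually writes you are aligned; the question is whether your additional reconstruction of Zhikov's argument is sound.

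The Fatou direction $\mathcal{G}\le\overline{\mathcal{G}}$ is correct (and is in fact already available in the main text, cf.\ Lemma~\ref{lem:PropOfG_h}\eqref{itm:1} via Lemma~\ref{lem:PropertiesOfG}\eqref{itm:3b}). The recovery-sequence direction, however, has concrete gaps. First, the Bogovski operator is a right inverse of the divergence on $W^{1,p}_0$; it is the wrong tool for extending a divergence-free field across $\partial\Omega$. Extending $\tau$ by zero destroys the solenoidal constraint on $\partial\Omega$, and repairing it is a nontrivial step (e.g.\ dilation on star-shaped subdomains) that you do not carry out. Second, and more seriously, the Fenchel--Young mechanism you describe cannot deliver the limsup inequality as stated: with $\xi^*$ a near-optimal selector for $\phi^*(x,\tau(x))$, the Fenchel--Young inequality applied to $\tau_\varepsilon$ reads $\phi^*(x,\tau_\varepsilon(x)) \ge \tau_\varepsilon(x):\xi^*(x)-\phi(x,\xi^*(x))$, which bounds $\mathcal{G}(\tau_\varepsilon)$ from \emph{below}, not from above. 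To get $\limsup_\varepsilon\mathcal{G}(\tau_\varepsilon)\le\mathcal{G}(\tau)$ one must control $\phi^*(\cdot,\tau_\varepsilon)$ from above, which is precisely where the $\Delta_2$-condition has to be exploited in a way your sketch leaves entirely implicit; the appeal to ``a diagonal argument'' is a placeholder for the whole substance of the proof. You acknowledge this yourself (``I expect this \dots\ to be the most delicate step''), so the proposal is explicitly incomplete there. For a submission that is more than a gesture at Zhikov's paper, you would need to either reproduce his construction concretely and track the $C_0\mapsto\tilde C$ replacement through each estimate, or adopt the paper's stance and leave it as a documented citation rather than a partial reconstruction.
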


\printbibliography

\end{document}